\newcommand{\notation}[3]{\newglossaryentry{#1}{name={#2}, description={#3}, category=symbol }}
\title{Correlations in uniform spanning trees: a fermionic approach}
\subjclass{60K35, 39A12, 60G15, 81V74}
\keywords{Uniform spanning tree, fermionic Gaussian free field, scaling limit, complete graph, correlations}
\date{\today}
\author{Alan Rapoport}
\affil{\hspace{0.9cm} Utrecht University, Budapestlaan 6, 3584 CD Utrecht, The Netherlands \newline
\url{a.rapoport@uu.nl}}
\begin{document}

\maketitle

\begin{abstract}
In the present paper we establish a clear correspondence between probabilities of certain edges belonging to a realization of the \emph{uniform spanning tree} (UST), and the states of a \emph{fermionic Gaussian free field}. Namely, we express the probabilities of given edges belonging or not to the UST in terms of fermionic Gaussian expectations. This allows us to explicitly calculate joint probability mass functions of the degree of the UST on a general finite graph, as well as obtain their scaling limits for certain regular lattices.
\end{abstract}



\section{Introduction}

In~\citet{cipriani2023fermionic} the authors study the joint moments of the so-called \emph{height-one field} of the \emph{Abelian sandpile model} (ASM), by means of a construction of a local field with \emph{fermionic} variables on a graph. This was achieved given the fact that the height-one field of the ASM at stationarity can be put into correspondence with certain realizations of the \emph{uniform spanning tree} (UST) (\citet{dhar1991,jarai,durre}). By doing so, the authors also managed to obtain closed-form expressions of the joint moments of the degree field of the UST. In the present paper we build up on those techniques to obtain, among other results, a closed-form expression for the probability mass function of the UST.

Our first observation is a general recipe to calculate probabilities of given edges to be or not to be in the UST in terms of fermionic variables, which is the result given in Proposition~\ref{prop:yes_no} in Section~\ref{sec:fermions}. Namely, for any finite graph $\mathcal G = (\Lambda,\,E)$ and directed edges $\{f_i\}_i$, $\{g_j\}_j$ with tail points $\{v_i\}_i$, $\{w_j\}_j$ respectively,
\[
    \P\big(\{f_i\}_i \in \mathrm{UST},\, \{g_j\}_j \notin \mathrm{UST}\big) = \Ef{\prod_{i}\nabla_{f_i}\overline\psi(v_i) \nabla_{f_i}\psi(v_i) \prod_{j}\left(1- \nabla_{g_j}\overline\psi(w_j) \nabla_{g_j}\psi(w_j)\right)},
\]
where $\overline\psi$ and $\psi$ are generators of a Grassmannian algebra and $\Ef{\cdot}$ is the so-called \emph{fermionic Gaussian free field state} (fGFF) (roughly, expected values under the fGFF measure). Precise definitions will be given in Section~\ref{sec:definitions}, but for the moment we stress that these variables satisfy anti-commuting relations as
\[
    \psi(v_i)\psi(v_j) = -\psi(v_j)\psi(v_i),\quad \forall\, i,\,j,
\]
and the fGFF measure is a Gaussian measure on these variables.

On the other hand, there is a well-known connection between these UST probabilities and determinants of the \emph{transfer-current matrix} $M$, which was originally studied to model electric networks. In this context, if $\mathcal G$ is considered as a network where each edge represents a conductance equal to $1$, for any two edges $e$ and $f$ the value of $M(e,\,f)$ is the current measured through $f$ when a battery imposes a unit current through $e$. These values can also be related to local times of a random walk on $\mathcal G$, so $M$ can also be expressed in terms of gradients of the Green's function of the graph in question (see e.g.~\citet{kassel2015transfer}). With this ingredient, the aforementioned fermionic expected values can be written in terms of determinants of $M$.

Afterwards, for $v\in\Lambda$ we can define the fields
\[
    X_v^{(k_v)}\coloneqq \sum_{\mcE\subseteq E_v:\, |\mcE|=k_v} \prod_{e\in\mcE} \nabla_e\overline\psi(v) \nabla_e\psi(v)
\]
for $k_v \in \{1,\,\ldots,\,\deg_{\mathcal G}(v)\}$, being $\deg_{\mathcal G}(v)$ the degree of $v$ on the graph $\mathcal G$, $E_v$ the edges incident to $v$, and
\[
    Y_v\coloneqq \prod_{e\in E_v}\left(1-\nabla_e\overline\psi(v) \nabla_e\psi(v)\right).
\]
With these fields we obtain the joint probability mass functions of the degree field $D_v$ of the UST as
\[
    \P\left(D_v = k_v , \, v\in V\right) = \Ef{\prod_{v\in V} X_v^{(k_v)} Y_v} ,
\]
establishing a clear connection between the fermionic formalism and the UST. This is the result of Theorem~\ref{thm:degree_fgff}. We highlight that fermionic variables have already been used to study problems of random trees, as in~\citet{Caracciolohyperforest} and \citet{bauerschmidt-swan}.

By means of the transfer-current matrix, this result can be further expanded to yield an explicit expression of the joint moments of the fields $\big(X_v^{(k_v)} Y_v\big)_v$ in terms of the Green's function of the graph $\mathcal G$, given in Theorem~\ref{thm:cum_discrete}. To the best of the author knowledge, in the literature there is no full general expression for the exact distribution of the degree field of a UST on a general graph. This can be applied, for example, to calculate the probability of a vertex on the complete graph $K_n$ to have degree $k$, for any $n\geq1$. Taking $n\to\infty$, we show that the degree variable behaves as a Poisson variable plus $1$, a result which was already known (\citet{aldous,pemantle}), but it comes in a more straight-forward manner with our approach since we have an explicit expression of the probability mass function of the degree variable for any $n\geq 1$ at any given point.

Finally, if we take a bounded subset $U\subset\R^d$ and restrict ourselves to a finite subset of regular lattices $\mathbf L$ like $\Z^d$ or the triangular or hexagonal lattices in $d=2$ by taking the intersection $U/\eps\cap\mathbf L$ with $\eps>0$, we can obtain a limiting expression for the joint cumulants of the variables $\big(X_v^{(k_v)} Y_v\big)_v$ when we take the limit of the whole infinite lattice, as
\begin{align}\label{eq:intro_limit_cumulants}
    \tilde\kappa(v_1,\,\dots,\,v_n) &\coloneqq \lim_{\eps\to 0} \eps^{-dn} \kappa\left(\left(X_v^{k_v}\right)^\eps \, Y_v^\eps:\,v\in V\right) 
	\nonumber \\
    &= - \left[\prod_{v\in V}C_\mathbf{L}^{(k_v)}\right] \sum_{\sigma} \sum_{\eta} \prod_{v\in V} \partial_{\eta(v)}^{(1)}\partial_{\eta(\sigma(v))}^{(2)} g_U\left(v,\, \sigma(v)\right),
\end{align}
where $g_U$ is the continuum Green's function on $U$, $\sigma$ are cyclic permutations on $V$, and $\eta$ are the directions of derivation on $\R^d$. Once again, the notation will become more clear after Section~\ref{sec:definitions}. The constants $C_\mathbf{L}^{(k_v)}$ are explicitly calculated in terms of the Green's function values of $\mathbf L$. We observe that the expression for the limiting cumulants are the same for all lattices up to a constant, hinting towards a potential universality property of the system. Unlike in~\citet{cipriani2023fermionic}, the proof of this now more general limiting result is unified for all the lattices considered, which makes the necessary conditions of the lattice more clear for our proof to work. The reader will also observe that expression~\eqref{eq:intro_limit_cumulants} has exactly the same functional form as that of the height-one field of the ASM (\citet{durre,cipriani2023fermionic}), albeit with a different constant in front, meaning that the limiting joint cumulants expressions are affected by the values of $(k_v)_v$ only through $C_{\mathbf L}^{(k_v)}$, but otherwise remain the same.

\paragraph{Structure of the paper.} We begin our paper setting up notation and defining the main objects of interest in Section~\ref{sec:definitions}. Section~\ref{sec:fermions} is devoted to recapitulate on the fermionic formalism used throughout the paper, as well as stating the first general results linking fermionic Gaussian states and UST probabilities. The moments/cumulants, both for a finite graph and the limiting case, are in Section~\ref{sec:cumulants}. At the end of that section we also discuss the case of the complete graph $K_n$ and its limit $n\to\infty$. Finally, Section~\ref{sec:proofs} is devoted to the proofs of the main theorems.

\paragraph{Acknowledgments.} The author warmly thanks Alessandra Cipriani and Wioletta Ruszel for their paramount input and guidance, leading to the proposal and completion of the present article. We also thank Leandro Chiarini for many valuable discussions.

\section{Notation and definitions}\label{sec:definitions}

\paragraph{Lattices, sets and functions}
For the rest of the paper $d$ will denote the dimension of the underlying space we work on.
We will write $|A|$ for the cardinality of a set $A$. For $n\in \N$, let $[n]$ denote the set $\{1,\,\ldots,\,n\}$.

Throughout the paper $\mathbf L$ will denote a lattice. In particular, we will consider what we will call the hypercubic lattice $\Z^d$, the two dimensional triangular lattice $\mathbf T$, and we will also make some remarks on the two dimensional hexagonal lattice $\mathbf H$. Since these lattices are regular, we write $\deg_{\mathbf L}$ for the degree of any vertex, which is $2d$ for $\Z^d$, $6$ for $\mathbf T$, and $3$ for $\mathbf{H}$.

We will denote an oriented edge $f$ on the lattice $\mathbf L$ as the ordered pair $(f^-,\,f^+)$, being $f^-$ the tail and $f^+$ the tip of the edge. Denote $\{e_i\}_{i\in[\deg_{\mathbf L}]}$ the set of edges with tail in the origin. The $e_i$'s define a natural orientation of edges which we will tacitly choose whenever we need oriented edges (for example when defining the matrix $M$ in~\eqref{eq:M}). The opposite vectors will be written as $e_{d+i} \coloneqq -e_i,\,i=1,\,\ldots,\,d$. Furthermore
\[
    \tilde e_i \coloneqq (0,\,\ldots,\,0,\,\underbrace{1}_{i\text{-th position}},\,0,\,\ldots,\,0),\quad i=1,\,\ldots,\,d
\]
denotes the $d$ standard coordinate vectors of $\R^d$.

The collection of all $e_i$, $i\in \{1,\,\ldots,\,\deg_{\mathbf L}\}$, will be called $E_o$, where $o$ is the origin. By abuse of notation but convenient for the paper, if $f=(f^-,\,f^-+e_i)$ for some $i\in\left[\deg_{\mathbf L}\right]$, we denote by $-f$ the edge $(f^-,\,f^- - e_i)$ whenever it exists; that is, the reflection of $f$ over $f^-$. 

Call $A \subseteq \R^d$ a countable set. For every $v \in A$, denote by $E_v$ the set $E_o+v$, and let $E(A) = \bigcup_{v\in A} E_v$. 

Let $U\subseteq \R^d$ and $e \in E_o$. For a function $g:\,U\to\R^d$ differentiable at $x$ we define $\partial_e g(x)$ as the directional derivative of $g$ at $x$ in the direction corresponding to $e$, that is 
\begin{equation*}
	\partial_e g(x) 
=
	\lim_{t \to 0^{+}} \frac{g(x+t  e)-g(x)}{t}.
\end{equation*}
%
%
Likewise, when we consider a function in two variables $g: \mathbb{R}^d \times \mathbb{R}^d \to \mathbb{R}$, we write then $\partial^{(j)}_e g(\cdot,\cdot)$ to denote the directional derivative in the $j$-th entry, $j=1,\,2$.

\paragraph{Graphs and Green's function} 
As we use the notation $(u,\,v)$ for a directed edge we will use $\{ u,\, v\}$ for the corresponding undirected edge. For a finite (unless stated otherwise) graph $\mathcal{G}=(\Lambda,\, E)$ we denote the degree of a vertex $v$ as $\deg_\mathcal{G}(v) \coloneqq \left|\{u \in \Lambda:\, u \sim v\}\right|$, where $u\sim v$ means that $u$ and $v$ are nearest neighbors.

\begin{definition}[Discrete derivatives]
For a function $g:\,\mathbf L\to \R$, its discrete derivative $\nabla_{e_i} g$ in the direction $i=1,\,\ldots,\,\deg_{\mathbf L}$ is defined as
\[
\nabla_{e_i} g(u)\coloneqq g(u+e_i)-g(u),\quad u\in\mathbf L.
\]
Analogously, for a function $g:\,\mathbf L\times \mathbf L\to\R$ we use the notation $\nabla^{(1)}_{e_i}\nabla^{(2)}_{e_j} g$ to denote the double discrete derivative defined as
\[
\nabla^{(1)}_{e_i}\nabla^{(2)}_{e_j}g(u,\,v)\coloneqq g(u+e_i,\,v+e_j)- g(u+e_i,\,v)-g(u,\,v+e_j)+g(u,\,v),
\]
for $u,\,v\in \mathbf L$, $i,\,j=1,\,\ldots,\,\deg_{\mathbf L}$.
\end{definition}
\begin{definition}[Discrete Laplacian on a graph] 
We define the (unnormalized) {\it discrete Laplacian} on $\mathbf L$ as
\begin{equation}\label{eq:laplacian}
    \Delta(u,\,v) \coloneqq
    \begin{dcases}
    \hfil -\big|\{w\in\mathbf L:\, w\sim u\}\big| & \text{if } u=v,\\
    \hfil 1 & \text{if } u \sim v,\\
    \hfil 0 & \text{otherwise.}
    \end{dcases}
\end{equation}
where $u,\,v \in \mathbf L$ and $u \sim v$ denotes that $u$ and $v$ are nearest neighbors. For any function $g:\,\mathbf L \to \mathcal A$, where $\mathcal A$
is an algebra over $\R$, we define 
\begin{equation}\label{eq:laplacian_on_function}
	\Delta g(u) \coloneqq 
	\sum_{v \in \mathbf L}\Delta(u,\,v) g(v) =
	\sum_{v \sim u} (g(v)-g(u)).
\end{equation}
\end{definition}
Note that we define the function taking values in an algebra because we will apply the Laplacian both on real-valued functions and functions defined on Grassmannian algebras, which will be introduced in Section~\ref{sec:fermions}.

We also introduce $\Delta_\Lambda\coloneqq (\Delta(u,\,v))_{u,\,v\in \Lambda}$, the restriction of $\Delta$ to $ \Lambda$.
Notice that for any lattice function $f$ we have that for all $u \in \Lambda$, 
\begin{equation}\label{eq:Delta_Lam}
\Delta_\Lambda g(u) = \sum_{v \in \Lambda} \Delta(u,v) g(v) = \Delta g_\Lambda (u)
\end{equation}
where $g_\Lambda$ is the lattice function given by $ g_\Lambda(u) \coloneqq g(u) \1_{u\in\Lambda}$.

The exterior boundary of a set $\Lambda$ will be defined by\label{boundary}
\[
\partial^{\mathrm{ex}}\Lambda \coloneqq \{u\in\mathbf L\setminus\Lambda:\,\exists \,v\in\Lambda:\,u\sim v\}.
\]

\begin{definition}[Discrete Green's function]
Let $u \in \Lambda$ be fixed. The Green's function $G_\Lambda(u,\cdot)$ with Dirichlet boundary conditions is defined as the solution of
\[
\begin{dcases}
-\Delta_{\Lambda} G_{\Lambda}(u,\,v) = \delta_u(v) &\text{ if } v\in \Lambda, \\
\hfil G_{\Lambda}(u,\,v) = 0 &\text{ if } v\in \partial^{\mathrm{ex}} \Lambda,
\end{dcases}
\]
where $\Delta_\Lambda$ is defined in \eqref{eq:Delta_Lam}. 
\end{definition}

\begin{definition}[Infinite volume Green's function, {\cite[Sec. 1.5-1.6]{lawler2013intersections}}]\label{def:Green-function-discrete}
    With a slight abuse of notation we denote by $G_0(\cdot,\cdot)$ two objects in different dimensions:
    \begin{itemize}
    \item $d\ge 3$ (only for $\Z^d$): $G_0$ is the solution of
    \[
    \begin{dcases}
        -\Delta G_0(u,\cdot)=\delta_u(\cdot)\\
        \lim_{\|v\|\to\infty}G_0(u,\,v) = 0
    \end{dcases}, \quad u\in \mathbf L.\\
    \]
	
    \item $d=2$: $G_0$ is given by
    \[
    G_0(u,\,v)= -\frac{1}{\deg_{\mathcal G}(u-v)}a(u-v),\quad u,\,v\in \mathbf L,
    \]
    where $a(\cdot)$ is the potential kernel defined as
    \[
    a(u) = \sum_{n=0}^\infty \big[\P_o(S_n=o)-\P_o(S_n=u)\big], \quad u\in \mathbf L,
    \]
    and $\{S_n\}_{n\geq0}$ is a random walk on the plane staring at the origin and $\P_o$ its probability measure.
    \end{itemize}
\end{definition}

\paragraph{Cumulants} We now give a brief recap of the definition of cumulants and joint cumulants for random variables. Let $n\in\N$ and $\mathbf X=(X_{i})_{i=1}^n$ be a vector of real-valued random variables, each of which has all finite moments.

\begin{definition}[Joint cumulants of random vector]\label{def:cum_mult}
The cumulant generating function $K(\mathbf t)$ of $\mathbf X$ for $\mathbf t=(t_1,\,\dots,\,t_n)\in \R^n$ is defined as 
\[
K(\mathbf t) \coloneqq \log \left ( \mathbb{E}\big[e^{ \mathbf t\cdot \mathbf X}\big]\right ) = \sum_{\mathbf m\in\N^n} \kappa_{\mathbf m}(\mathbf X) \prod_{j=1}^n \frac{t_j^{m_j}}{m_j!} \ ,
\]
where $\mathbf t\cdot \mathbf X$ denotes the scalar product in $\R^n$, $\mathbf m=(m_1,\,\dots,\,m_n)\in\N^n$ is a multi-index with $n$ components, and
\[
\kappa_{\mathbf m}(\mathbf X)=\frac{\partial^{|m|}}{\partial t_1^{m_1}\cdots \partial t_n^{m_n}}K(\mathbf t)\Big|_{t_1=\ldots=t_n=0} \ ,
\]
being $|m|=m_1+\cdots+m_n$. The joint cumulant of the components of $\mathbf X$ can be defined as a Taylor coefficient of $K(t_1,\,\ldots,\,t_n)$ for $\mathbf m=(1,\,\ldots,\,1)$; in other words
\[
\kappa(X_1,\,\ldots,\,X_n)=\frac{\partial^n}{\partial t_1\cdots \partial t_n} K(\mathbf t)\Big|_{t_1=\ldots=t_n=0} \ .
\]
In particular, for any $A\subseteq [n]$, the joint cumulant $\kappa(X_i:\, i\in A)$ of $\mathbf X$ can be computed as 
\[
\kappa(X_i:\, i\in A) = \sum_{\pi \in \Pi(A)} (|\pi|-1)! (-1)^{|\pi|-1} \prod_{B\in \pi} \mathbb{E} \left[\prod_{i\in B} X_i \right] \ ,
\]
with $|\pi|$ the cardinality of $\pi$.
\end{definition}
Let us remark that, by some straightforward combinatorics, it follows from the previous definition that
\begin{equation}\label{eq:mom_to_cum}
\mathbb{E} \left[\prod_{i\in A} X_i \right] = \sum_{\pi \in \Pi(A)} \prod_{B\in \pi} \kappa(X_i:\, i\in B) \ .
\end{equation}
If $A=\{i,\,j\}$, $i,\,j\in[n]$, then the joint cumulant $\kappa(X_i,\,X_j)$ is the covariance between $X_{i}$ and $X_{j}$. We stress that, for a real-valued random variable $X$, one has the equality
\[
\kappa(\underbrace{X,\,\ldots,\,X}_{n\text{ times}})=\kappa_n(X),\quad n\in \N,
\]
which we call the \emph{$n$-th cumulant of $X$}.

\paragraph{Good sets and transfer-current matrix} We need to introduce a technical requirement for the sets we will study in the theorems that follow, that prevent us to choose points that share edges. This requirement can however be circumvented, as we show in Section~\ref{not_good}.

\begin{definition}[Good set]\label{def:goodset}
We call $A \subseteq \Lambda$ a \emph{good set} if it does not contain any nearest neighbors. That is, $||v-u|| > 1$ for any $u,\,v\in A$.
\end{definition}

Finally, we need the notion of the \emph{transfer-current matrix}, a key ingredient in many expressions we obtain in our theorems.

\begin{definition}[Transfer-current matrix]\label{def:M}
We define the \emph{transfer-current matrix} $M_\Lambda$ as
\begin{equation}\label{eq:M}
    M_\Lambda(f,\,g) \coloneqq 
	\nabla_{\eta^*(f)}^{(1)}\nabla_{\eta^*(g)}^{(2)}G_{\Lambda}(f^{-},\,g^-) ,\quad f,\,g \in E(\Lambda),
\end{equation}
where $\eta^*(f)\in E_o$ is the coordinate direction induced by $f\in E(\Lambda)$ on $f^-$ (in the sense that $\eta^*(f)=e_i$ if $f=(f^-,\, f^-+e_i)$). Hereafter, to simplify notation we will omit the dependence of $M_\Lambda$ on $\Lambda$ and simply write $M$.
\end{definition}

\begin{remark}
    As stated in~\citet{lyonsperes}, there is another definition of $M$ in terms of electrical networks, as follows: let $\mathcal G$ represent an electric network with impedance $1$ on each edge. Defining $\phi_{f}(x)$ as the voltage at vertex $x\in\Lambda$ when a battery of $1$ volt is connected between vertices $g^-$ and $g^+$ by removing the resistance on $g$ and setting the voltage at $f^-$ to $0$, $M(f,\,g)$ is given by
    \[
        M = \phi_f(g^+) - \phi_f(g^-) .
    \]
\end{remark}
\section{Fermionic formalism}\label{sec:fermions}

As we will see in this section, working with Grassmann variables (i.e., fermions) allow us to express properties from the UST. We will not give a complete exposition of the subject in this paper; however, the interested reader can resort to \citet{cipriani2023fermionic} for a similar setting to the one used here, or~\cite{abdesselam,Meyer} for a more comprehensive presentation.

\begin{definition}[{\citet[Definition 1]{abdesselam}}]\label{def:GrasAlg}
Let $M\in \N$ and $\xi_1,\,\ldots,\,\xi_M$ be a collection of letters.  Let $\R\left[\xi_{1},\,\ldots,\,\xi_{M}\right]$ be the quotient of the free non-commutative algebra $\R\langle\xi_1,\,\ldots,\,\xi_M\rangle$ by the two-sided ideal generated by the anticommutation relations
\begin{equation}\label{eq:anticommute}
\xi_j \xi_j = - \xi_i \xi_j,
\end{equation}
where $i,\,j \in [M]$. We will denote it by $\Omega^{M}$ and call it the Grassmann algebra in $M$ variables. The $\xi$'s will be referred to as {\it Grassmannian variables} or {\em generators}. Due to anticommutation these variables are called ``fermionic'' (as opposed to commutative or ``bosonic'' variables).
\end{definition}

Notice that, due to the anticommutative property, we have that for any variable Pauli's exclusion principle holds~\citep[Proposition 2]{abdesselam}:
\begin{equation}\label{eq:squares-are-zero}
\xi_i^2 = 0, \quad i\in[M].
\end{equation}
\begin{definition}[Grassmannian--Berezin integration]
The Grassmann--Berezin integral is defined as
\[
\int F d \xi \coloneqq \partial_{\xi_{M}}\partial_{\xi_{M-1}} \cdots \partial_{\xi_2}\partial_{\xi_1}F, \quad F\in \Omega^{M}.
\]
\end{definition}

On the grounds of this definition, for the rest of the paper Grassmannian--Berezin integrals will be denoted by $\left(\prod_{i=1}^M\partial_{\xi_i}\right) F$.

The most important result that we will use in this work is as follows. For a given matrix $A = (A_{i,\,j})_{i \in I_0,\, j \in J_0}$, and $I \subseteq I_0,\, J\subseteq J_0$, such that $|I|=|J|$, we write $\det (A)_{IJ}$ to denote the determinant of the submatrix $(A_{i,\,j})_{i \in I,\, j \in J}$. When $I=J$, we simply write $\det(A)_I$.

\begin{theorem}[Wick’s theorem for ``complex'' fermions]\label{thm:sportiello}
Let $A $ be an ${m\times m}$, $B$ an $ {r\times m}$ and $C$ an $ {m\times r}$ matrix respectively with coefficients in $\R$.
For any sequences of indices $I = 
\{i_1,\,\dots,\,i_r\}$ and $J = \{j_1,\,\dots,\,j_r\}$ in $[m]$ of the same length $r$, if the matrix $A$ is invertible we have
\begin{enumerate}[label=\arabic*.,ref=\arabic*.]
    \item\label{thm_Wick_one} $\displaystyle \left(\prod_{i=1}^m\partial_{\bpsi_i}\partial_{\psi_i}\right) \prod_{\alpha=1}^r  \psi_{i_\alpha}\bpsi_{j_\alpha}\exp\left((\bm{\psi},\,A\bm{\bpsi})\right) = \det(A) \det\left(A^{-\intercal}\right)_{IJ}$,

    \item\label{thm_Wick_two} $\displaystyle \left(\prod_{i=1}^m\partial_{\bpsi_i}\partial_{\psi_i}\right) \prod_{\alpha=1}^r (\psi^T C)_\alpha (B\bpsi)_\alpha\exp\left((\bm{\psi},\,A\bm{\bpsi})\right) = \det(A) \det\left(BA^{-1}C\right)$.
\end{enumerate}
If $|I| \neq |J|$, the integral is $0$.
\end{theorem}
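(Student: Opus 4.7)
The plan is to establish Part 1 directly from the definition of Berezin integration and then reduce Part 2 to it via bilinearity and the Cauchy--Binet formula. First I would recall the baseline identity
\[
\left(\prod_{i=1}^m\partial_{\bpsi_i}\partial_{\psi_i}\right) \exp\bigl((\bm{\psi},A\bm{\bpsi})\bigr) = \det A,
\]
obtained by expanding $\exp((\bm{\psi},A\bm{\bpsi})) = \sum_{k\ge 0}\frac{1}{k!}\bigl(\sum_{i,j}\psi_i A_{ij}\bpsi_j\bigr)^k$; the relations $\psi_i^2=\bpsi_i^2=0$ force only the $k=m$ term to contribute, and sorting the anticommuting product into canonical order reproduces the Leibniz formula for $\det A$.

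For Part 1, the inserted factors $\prod_\alpha \psi_{i_\alpha}\bpsi_{j_\alpha}$ consume the letters $\psi_i$ for $i\in I$ and $\bpsi_j$ for $j\in J$. Hence the only surviving term in the exponential expansion is the one of order $k=m-r$, since this is the unique order that supplies exactly the complementary letters $\psi_i$ with $i\in[m]\setminus I$ and $\bpsi_j$ with $j\in [m]\setminus J$. After collecting signs from the Berezin ordering and from reordering the surviving letters into canonical position, the combinatorial sum over this term reduces to $(-1)^{\sum I+\sum J}$ times the complementary minor $\det(A)_{[m]\setminus I,\,[m]\setminus J}$; Jacobi's cofactor identity $\det(A)\,(A^{-1})_{J,I}=(-1)^{\sum I+\sum J}\det(A)_{[m]\setminus I,\,[m]\setminus J}$ then rewrites this as $\det(A)\,\det(A^{-\intercal})_{I,J}$. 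The main obstacle here is the sign bookkeeping: one must verify that the signs arising from the order of the Berezin derivatives, from anticommuting the inserted factors past the exponential expansion, and from Jacobi's identity combine consistently.

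For Part 2, I would expand $(\bm{\psi}^\intercal C)_\alpha = \sum_{i}\psi_{i}C_{i,\alpha}$ and $(B\bm{\bpsi})_\alpha = \sum_{j}B_{\alpha,j}\bpsi_{j}$, noting that the global sign incurred by moving all $\psi$'s past all $\bpsi$'s is independent of the summation indices. Applying Part 1 to each resulting monomial produces
\[
\det(A)\sum_{I,J}\Bigl(\prod_\alpha C_{i_\alpha,\alpha}\Bigr)\Bigl(\prod_\alpha B_{\alpha,j_\alpha}\Bigr)\det(A^{-\intercal})_{I,J},
\]
where the sums run over ordered tuples of indices $I=(i_1,\dots,i_r)$ and $J=(j_1,\dots,j_r)$ (tuples with repetitions contribute zero because the determinant vanishes). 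Splitting each sum into a sum over unordered $r$-subsets times a sum over permutations converts the products of $B$- and $C$-entries into $r\times r$ minors of $B$ and $C$, and the Cauchy--Binet formula applied to the triple product $BA^{-1}C$ collapses the resulting sum of products of minors into $\det(BA^{-1}C)$. Finally, the vanishing when $|I|\neq|J|$ is immediate by a parity argument: the Berezin integrand then contains an unequal number of $\psi$'s and $\bpsi$'s relative to what the exponential expansion can supply, so every term vanishes.
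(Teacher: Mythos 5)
The paper does not prove Theorem~\ref{thm:sportiello} at all: it is imported as a standard result of the Grassmann calculus (in the spirit of the cited references of Abdesselam and of Caracciolo--Sportiello) and used as a black box, so there is no internal proof to compare against. Your argument is the standard self-contained derivation and it is correct: the baseline identity $\left(\prod_i\partial_{\bpsi_i}\partial_{\psi_i}\right)\exp((\bm\psi,A\bm\bpsi))=\det A$ follows from nilpotency forcing the order-$m$ term; for Part 1 the inserted monomial forces the exponential to supply exactly the complementary letters at order $m-r$, giving the signed complementary minor, which Jacobi's identity $\det\big((A^{-1})_{J,I}\big)\det(A)=(-1)^{\sum I+\sum J}\det(A)_{[m]\setminus I,[m]\setminus J}$ converts into $\det(A)\det(A^{-\intercal})_{IJ}$; and Part 2 follows by multilinearity plus Cauchy--Binet applied to $BA^{-1}C$. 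Two small remarks. First, in Part 2 no reordering of letters is actually needed: expanding $(\psi^{\intercal}C)_\alpha(B\bpsi)_\alpha$ produces monomials already in the interleaved form $\prod_\alpha\psi_{i_\alpha}\bpsi_{j_\alpha}$ to which Part 1 applies verbatim, so the "global sign" you mention is $+1$. Second, the real content is the sign bookkeeping in Part 1, which you correctly identify as the main obstacle but do not carry out; a complete write-up would fix a convention for the Berezin ordering (the paper's is $\partial_{\bpsi_M}\partial_{\psi_M}\cdots\partial_{\bpsi_1}\partial_{\psi_1}$ after relabelling the $2m$ generators) and verify that the permutation sign from sorting $\psi_I\bpsi_J$ together with the complementary exponential letters into that canonical order is exactly the $(-1)^{\sum I+\sum J}$ absorbed by Jacobi's identity. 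With that check supplied, the proof is complete.
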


\begin{definition}[Fermionic Gaussian free field]
The normalized fermionic Gaussian free field state is the linear map $\Ef{\cdot}:\, \Omega^{2\Lambda} \to \mathbb{R}$ defined as
\[
\Ef{F} \coloneqq \frac{1}{\det(-\Delta_\Lambda)} \left(\prod_{v\in\Lambda}\partial_{\bpsi_v}\partial_{\psi_v}\right)\exp\left(\inpr{\bm{\psi}}{-\Delta_\Lambda \bm{\bpsi}}\right)F,\quad F\in \Omega^{2\Lambda}.
\] 
\end{definition}

\subsection{Fermions and the Uniform Spanning Tree}

Now that we have the key ingredients to work with fermions, we will see how these objects allow us to study probabilistic behaviors of the edges of a realization of the UST measure.

We will first consider gradients of the generators in the following sense. 

\begin{definition}[Gradient of the generators]
The gradient of the generators in the $i$-th direction is given by
\[
\nabla_{e_i}\psi(v)=\psi_{v+e_i}-\psi_v,\quad\nabla_{e_i}\bpsi(v)=\bpsi_{v+e_i}-\bpsi_v,\quad v\in \Lambda,\,i=1,\,\ldots,\,\deg_\mathcal{G}(v).
\]
\end{definition}
\noindent

\vspace{3mm}
\noindent
Define $\zeta(e)$ as
\[
    \zeta(e) \coloneqq \nabla_e\psi(e^-)\nabla_e\bpsi(e^+), 
\]
and observe that the elements $\zeta(\cdot)$ are commutative, that is,
\[
    \zeta(a)\zeta(b) = \zeta(b)\zeta(a),\quad \forall \, a,\,b \in E,
\]
but we still have that $\zeta(a)^2 = 0$. These objects are key when analyzing probabilities of edges showing up in the UST, in the sense of the result that follows.

\begin{proposition}\label{prop:yes_no}
    Let the tree $T$ be a realization of the UST measure. For $F,\,G \subseteq E$, $F\cap G = \emptyset$ it holds that
    \[
        \P\left(F \subseteq T,\, G\cap T = \emptyset\right) = \Ef{\prod_{f\in F} \zeta(f) \prod_{g\in G} \left(1 - \zeta(g)\right)} = \det\big(M^{(|F|)}\big),
    \]
    where
    \[
        M^{(|F|)}(i,\,j) = 
        \begin{dcases}
            M(i,\,j) \quad &\text{if } \, i\leq |F|,\\
            -M(i,\,j) &\text{if } \, |F|+1\leq i\leq |F|+|G|,\, i\neq j, \\
            1-M(i,\,j) & \text{if } \, |F|+1\leq i\leq |F|+|G|,\, i=j.
        \end{dcases}
    \]
\end{proposition}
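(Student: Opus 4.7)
}

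The strategy is to prove the two equalities separately: first compute the fermionic expectation via Wick's theorem and inclusion--exclusion on the $(1-\zeta(g))$ factors, then match the resulting expression with the classical Burton--Pemantle extension of Kirchhoff's formula, and finally identify this expanded sum with $\det(M^{(|F|)})$.

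\textbf{Step 1: reduction to ``all-edges-in'' determinants.}  Since the $\zeta(e)$ are mutually commuting and satisfy $\zeta(e)^2=0$, I can expand
\[
\prod_{g\in G}\bigl(1-\zeta(g)\bigr)=\sum_{G'\subseteq G}(-1)^{|G'|}\prod_{g\in G'}\zeta(g),
\]
and hence by linearity of the fGFF state
\[
\Ef{\prod_{f\in F}\zeta(f)\prod_{g\in G}(1-\zeta(g))}=\sum_{G'\subseteq G}(-1)^{|G'|}\,\Ef{\prod_{e\in F\cup G'}\zeta(e)}.
\]
Each $\zeta(e)$ is a product of a gradient of $\psi$ and a gradient of $\bpsi$ at the endpoints of $e$, so the expectation of $\prod_{e\in H}\zeta(e)$ fits exactly the template of Theorem~\ref{thm:sportiello}(\ref{thm_Wick_two}), with $A=-\Delta_{\Lambda}$ and with $B,C$ the $|H|\times|\Lambda|$ matrices encoding ``take the discrete gradient along $e$ at $e^{\pm}$''. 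Applying the theorem gives $\det(BA^{-1}C)$; since $A^{-1}=G_\Lambda$ and the two-sided gradient of $G_\Lambda$ is precisely $M$ by Definition~\ref{def:M}, the result is $\det(M|_{H})$. Thus
\[
\Ef{\prod_{f\in F}\zeta(f)\prod_{g\in G}(1-\zeta(g))}=\sum_{G'\subseteq G}(-1)^{|G'|}\det\bigl(M|_{F\cup G'}\bigr).
\]

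\textbf{Step 2: matching with the UST probability.}  The transfer-current theorem of Burton--Pemantle gives $\P(H\subseteq T)=\det(M|_{H})$ for any edge set $H$. Inclusion--exclusion on the events $\{g\in T\}_{g\in G}$ yields
\[
\P\bigl(F\subseteq T,\,G\cap T=\emptyset\bigr)=\sum_{G'\subseteq G}(-1)^{|G'|}\P\bigl(F\cup G'\subseteq T\bigr)=\sum_{G'\subseteq G}(-1)^{|G'|}\det(M|_{F\cup G'}),
\]
which coincides with the expression obtained in Step~1, establishing the first equality.

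\textbf{Step 3: the sum is a single determinant.}  It remains to show
\[
\sum_{G'\subseteq G}(-1)^{|G'|}\det\bigl(M|_{F\cup G'}\bigr)=\det\bigl(M^{(|F|)}\bigr).
\]
Write the rows of $M^{(|F|)}$ indexed by $g\in G$ as $-(\text{row }g\text{ of }M|_{F\cup G})+\mathbf{e}_g$, where $\mathbf{e}_g$ is the standard basis vector in the $(F\cup G)$-indexed space. Multilinearity of the determinant in each such row lets me split into $2^{|G|}$ terms indexed by subsets $G''\subseteq G$, where rows in $G''$ contribute the basis vector $\mathbf{e}_g$ and rows in $G\setminus G''$ contribute $-(\text{row of }M)$. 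Expanding the determinant along the unit-vector rows removes those rows and the matching columns, leaving $\det(M|_{F\cup(G\setminus G'')})$; the sign from the $-M$ rows gives $(-1)^{|G\setminus G''|}$. Relabeling $G'=G\setminus G''$ recovers the desired identity.

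\textbf{Main obstacle.}  The substantive step is Step~1: the bookkeeping of orientation conventions between $\zeta(e)$, $\eta^*$, and the gradient operators $B,C$ appearing in the Wick identity, ensuring that the signs produced by the anticommutation of fermionic generators line up with the signs implicit in the entries of $M$. Steps~2 and~3 are essentially algebraic and sign-tracking.
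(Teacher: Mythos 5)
Your proposal is correct and follows essentially the same route as the paper: expand $\prod_{g\in G}(1-\zeta(g))$ by inclusion--exclusion and match the resulting alternating sum term by term with $\sum_{G'\subseteq G}(-1)^{|G'|}\P(F\cup G'\subseteq T)$. The only difference is that you supply in-line proofs of the two ingredients the paper cites externally --- the identity $\Ef{\prod_{e\in H}\zeta(e)}=\det(M)_{H}=\P(H\subseteq T)$ (delegated in the paper to Proposition~\ref{thm:swan_UST_fGFF} and to Burton--Pemantle) and the collapse of the alternating sum of minors into the single determinant $\det\big(M^{(|F|)}\big)$ (which the paper attributes to Pemantle's Theorem~4.3); your multilinearity argument for the latter is correct.
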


\begin{proof}
Observe that
\[
    \prod_{g\in G}(1-\zeta(g)) = \sum_{\gamma \subseteq G}(-1)^{|\gamma|}\prod_{g\in\gamma}\zeta(g),
\]
so that
\[
    \Ef{\prod_{f\in F} \zeta(f) \prod_{g\in G}(1-\zeta(g))} = \sum_{\gamma\subseteq G}(-1)^{|\gamma|} \Ef{\prod_{g\in\gamma}\zeta(g) \prod_{f\in F}\zeta(f)} = \sum_{\gamma\subseteq G} (-1)^{|\gamma|} \,\P\left(F \subseteq T,\,\gamma\subseteq T\right).
\]
Using the inclusion-exclusion principle, we obtain the first equality. The equality between the first and third members follows from \citet[Theorem 4.3]{pemantle} (noting that there is a typo in their definition of $M^{(|F|)}$).
\end{proof}

\begin{remark}\label{rmk:fermions}
In view of Proposition~\ref{prop:yes_no} we have the following recipe to cook up a field whose expectation matches that of the UST: for each edge $f$ we want in the UST, add a factor $\zeta(f)$, and for each edge $g$ we do not want, add a factor $1-\zeta(g)$. Observe that, once we add an edge $e$ by adding the factor $\zeta(e)$, then adding another factor $1-\zeta(e)$ does nothing. This is easily seen from the fact that
\[
    \Ef{\zeta(e) \left(1-\zeta(e)\right)} = \Ef{\zeta(e)} - \Ef{\zeta(e)^2} = \Ef{\zeta(e)} .
\]
\end{remark}

\subsection{Degree of the Uniform Spanning Tree}\label{subsec:degree_ust}

So far we have seen the relationships between fermionic variables and particular edges on a spanning tree. We will now use those results to study the behaviour of the degree of a realization of the UST measure at given points on the graph.

\begin{remark}\label{rem:direct-vs-not-directed}
So far we have defined edges on graphs to be oriented. However, in the following definitions the orientation play no r\^ole, so we will consider edges as non oriented.
\end{remark}

Let $\mathcal G = (\Lambda,\,E)$ be any graph. For each $v\in \Lambda$ and $k_v \in \{1,\,\dots,\,\deg_{\mathcal{G}}(v)\}$, we define the field $\mathbf{X^{(k)}} = \big(X_v^{(k_v)}\big)_{v\in V}$ as
\begin{equation}\label{eq:defX}
    X_v^{(k_v)}\coloneqq \sum_{\mcE\subseteq E_v:\, |\mcE|=k_v} \prod_{e\in\mcE} \zeta(e),\quad v\in \Lambda.
\end{equation}
In view of Remark~\ref{rmk:fermions}, this is equivalent to asking that the degree of the UST at a point $v$ is at least $k_v$; that is,
\[
    \sum_{\mcE\subseteq E_v:\, |\mcE|=k_v} \prod_{e\in\mcE} \1_{\{e\in T\}} ,\quad v\in \Lambda.
\]
%
If $k_v=1$ for all $v$, this is just the field $(X_v)_v$ defined in \citet{cipriani2023fermionic}. Observe also that, because of the nilpotency property of fermions,
\[
    X^{(k_v)}_v = (X_v)^{k_v},
\]
so we will sometimes indistinctly denote it as $X_v^{k_v}$ The same applies for $\mathbf X^{(\mathbf k)}$ written as $\mathbf{X^k}$. We will also need auxiliary Grassmannian observables ${\bf Y}=(Y_v)_{v\in V}$ given by
\begin{equation}\label{eq:defY}
    Y_v\coloneqq \prod_{e\in E_v}\left(1-\zeta(e)\right),\quad v\in \Lambda.
\end{equation}
Define the degree field of the UST $(D_v)_{v\in\Lambda}$ as
\begin{equation}\label{eq:degree}
    D_v \coloneqq \sum_{e\in E_v}\1_{\{e\in T\}},
\end{equation}
which is ``equal'' (in the sense of its finite-dimensional distributions) to $\big(X_v\big)_v$, as it was seen in \citet{cipriani2023fermionic}. More precisely, for $V\subseteq\Lambda$ a good set (neighboring points will be dealt with in Section~\ref{not_good}),
\[
    \E\left[\prod_{v\in V}D_v\right] = \Ef{\prod_{v\in V}X_v}.
\]
For $k_v\in\{1,\,\dots,\,\deg_{\mathcal G}(v)\}$, define the degree-$k_v$ field as
\[
    \delta^{(k_v)}_v = \1_{\{D_v = k_v\}}
\]
As a consequence of Proposition~\ref{prop:yes_no}, we can express the probability of the degree being a certain value of the UST at different not neighboring points, as in theorem that follows.

\begin{theorem}\label{thm:degree_fgff}
Let $V\subset\Lambda$ be a good set. For any $k_v\in\{1,\,\dots,\,\deg_{\mathcal G}(v)\}$, with $v\in V$, it holds that
\begin{equation}
\P\left(D_v = k_v , \, v\in V\right) = \E\left[\prod_{v\in V}\delta^{(k_v)}_v\right] = \Ef{\prod_{v\in V} X_v^{k_v} Y_v} .
\end{equation}
\end{theorem}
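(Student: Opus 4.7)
The plan is to decompose the event $\{D_v = k_v,\, v \in V\}$ into a disjoint union according to which edges incident to each $v$ belong to $T$, apply Proposition~\ref{prop:yes_no} to each atom, and then repackage the sum into $\Ef{\prod_v X_v^{k_v}Y_v}$ using the algebraic properties of the $\zeta(e)$'s.

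First I would observe that since $V$ is a good set, no two distinct $v,w\in V$ share an incident edge, so the edge sets $\{E_v\}_{v\in V}$ are pairwise disjoint. For each family $\mathcal{F} = (\mathcal{E}_v)_{v\in V}$ with $\mathcal{E}_v \subseteq E_v$ and $|\mathcal{E}_v| = k_v$, the event
\[
A_{\mathcal{F}} = \bigcap_{v\in V}\bigl\{\mathcal{E}_v \subseteq T,\; (E_v\setminus \mathcal{E}_v)\cap T = \emptyset\bigr\}
\]
has $F := \bigcup_v \mathcal{E}_v$ and $G := \bigcup_v (E_v\setminus\mathcal{E}_v)$ disjoint, so Proposition~\ref{prop:yes_no} applies and yields $\P(A_\mathcal{F}) = \Ef{\prod_{v\in V}\prod_{e\in \mathcal{E}_v}\zeta(e)\prod_{e\in E_v\setminus\mathcal{E}_v}(1-\zeta(e))}$. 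The events $A_{\mathcal{F}}$ are disjoint (the configuration $\mathcal{F}$ is uniquely recovered from $T$ as $\mathcal{E}_v = T\cap E_v$), and their union is exactly $\{D_v=k_v,\,v\in V\}$.

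Summing over $\mathcal{F}$ and pulling the sum inside the linear functional $\Ef{\cdot}$ gives
\[
\P(D_v=k_v,\,v\in V) = \Ef{\prod_{v\in V} \Bigl(\sum_{\mathcal{E}_v\subseteq E_v,\,|\mathcal{E}_v|=k_v} \prod_{e\in \mathcal{E}_v}\zeta(e)\prod_{e\in E_v\setminus\mathcal{E}_v}(1-\zeta(e))\Bigr)}.
\]
It remains to identify the inner bracket at each $v$ with $X_v^{k_v}Y_v$. For this I would use that the $\zeta(e)$'s pairwise commute and satisfy $\zeta(e)^2=0$: splitting $Y_v = \prod_{e\in\mathcal{E}_v}(1-\zeta(e))\cdot \prod_{e\in E_v\setminus\mathcal{E}_v}(1-\zeta(e))$ for each fixed $\mathcal{E}_v$ appearing in $X_v^{k_v}$, the first factor collapses via $\zeta(e)(1-\zeta(e)) = \zeta(e)$, yielding exactly the summand in the bracket above.

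The main obstacle is largely bookkeeping: one must justify the commutation of $\zeta$'s across different vertices (they commute by the stated property of $\zeta$), and confirm that the ``good set'' hypothesis is exactly what is needed to make $F$ and $G$ disjoint so that Proposition~\ref{prop:yes_no} applies. The algebraic simplification $X_v^{k_v}Y_v = \sum_{\mathcal{E}_v}\prod_{\mathcal{E}_v}\zeta(e)\prod_{E_v\setminus\mathcal{E}_v}(1-\zeta(e))$ is an identity in the Grassmann algebra that is essentially the fermionic analogue of the inclusion-exclusion identity already exploited in Proposition~\ref{prop:yes_no}, and requires nothing beyond nilpotency.
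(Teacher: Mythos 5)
Your proposal is correct and takes essentially the same route as the paper: its Lemma~\ref{lem:degree_kv} is exactly your disjoint decomposition over the families $(\mathcal{E}_v)_{v\in V}$, and where the paper redoes the inclusion--exclusion by hand from $\P(S\subseteq T)=\Ef{\prod_{f\in S}\zeta(f)}$, you simply invoke Proposition~\ref{prop:yes_no}, which already packages that step. Your closing identification $X_v^{k_v}Y_v=\sum_{\mathcal{E}_v}\prod_{e\in\mathcal{E}_v}\zeta(e)\prod_{e\in E_v\setminus\mathcal{E}_v}\bigl(1-\zeta(e)\bigr)$ via $\zeta(e)(1-\zeta(e))=\zeta(e)$ is the same nilpotency argument the paper records in Remark~\ref{rmk:fermions}.
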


Note that this is a generalization of~\citet[Theorem 3.1]{cipriani2023fermionic}, where we obtain the same result for $k_v=1$ for all $v\in V$, even though in that case our main focus was the height-one field of the Abelian sandpile model.

\begin{remark}
Observe that points in $V$ need to be different. In fact, for $v\in V$,
\[
    \E\left[D_v^2\right] \neq \Ef{X_v^2},
\]
and of course neither does it hold for larger powers. This is because the square of an indicator function (see~\eqref{eq:degree}) is the same indicator, whereas the square of $\zeta(e)$, $e\in E$ (see~\eqref{eq:defX}), is $0$. However, using Proposition~\ref{prop:yes_no} we observe that
\[
    \Ef{X_v(X_v+1)} = \Ef{X_v^2} + \Ef{X_v} = \sum_{\substack{e,\,f\in E_v\\e\neq f}}\det{(M)}_{e,\,f} + \sum_{e\in E_v} M(e,\,e) = \E\left[D_v^2\right] .
\]
Following the same reasoning,
\[
    \E\big[D_v^m\big] = \sum_{i\in[m]} a_i \Ef{X_v^i},
\]
where the coefficients $a_i$ correspond to a modification of the binomial coefficients. More precisely, for $i=1,\,\dots,\,\floor{m/2}$
\[
    a_i = \binom{m}{i-1},
\]
while for $i=\floor{m/2} + 1,\,\dots,\,m$
\[
    a_i = \binom{m}{i}.
\]
We could also find the reverse expression, that is, $\Ef{X_v^m}$ as a function of $\E[D_v^i]$, $i\in[m]$. We can use the results on~\citet[Section 5.2]{pemantle} to obtain
\[
    \Ef{X_v^m} = m!\, \E\left[\binom{D_v}{m}\right] = \E\left[\prod_{i=0}^{m-1}(D_v-i)\right]
\]
for any $m\in\N$.
\end{remark}
\section{Cumulants of the UST degree}\label{sec:cumulants}

We will now study the cumulants (related to moments, as seen in Section~\ref{sec:definitions}) of the fields ${\bf X^k Y}$ on an arbitrary graph, and then obtain limiting expressions for some particular lattices. The next theorem is a generalization of~\citet[Theorem 3.5]{cipriani2023fermionic} when $k_v=1$ for all points $v\in V$.

Let $U$ be connected, bounded subset of $\R^d$ with smooth boundary, and define $U_\eps \coloneqq U/\eps \cap \mathbf L$ for $\eps>0$.
For any $v\in U$, let $v_\eps$ be the discrete approximation of $v$ in $U_\eps$; that is, $v_\eps \coloneqq \floor{v/\eps}$.
Define $g_U$ as the continuum harmonic Green's function on $U$ with $0$-boundary conditions outside $U$.
We write $(X_v^{k_v})^\eps$, $(\bf{X^k})^\eps$ and $Y_v^\eps$, $\bf{Y}^\eps$ to emphasize the dependence of $v$ on $\eps$ whenever $v$ belongs to $U_\eps$. Cyclic permutations without fixed points of a finite set $A$ are denoted as $S_\text{cycl}(A)$. We will also need the so called \emph{connected permutations} $S_{\co}(A)$ of $A$, the definition of which we defer to Subsection~\ref{subsec:graphs-and-permutations}, where we prove the theorems.

\begin{theorem}[Cumulants of ${\mathbf{X^k} \, \mathbf Y}$ on a graph]\label{thm:cum_discrete}
Let $\mathcal G = (\Lambda,\,E)$ be any graph. Let $n\geq 1$, $V\coloneqq \left\{v_1,\,\dots,\,v_n\right\}\subseteq \Lambda^{\mathrm{in}}$ be a good set, with $v_i\neq v_j$ for all $i\neq j$. For a set of edges $\mcE\subseteq E$ and $v\in V$ denote $\mcE_v\coloneqq \{f\in\mcE:\,f^- = v\}\subseteq E_v$. The $n$-th joint cumulants of the fields $\big(X_v^{k_v} Y_v\big)_{v\in V}$ are given by
\begin{equation}\label{eq:thm:maincum3}
    \kappa\left(X_v^{k_v} Y_v:\,v\in V\right) 
	= 
	(-1)^{\sum_v k_v}
	\sum_{\mcE\subseteq E:\, |\mcE_v|\ge k_v \, \forall v} K(\mcE) 
	\sum_{\tau\in S_{\co}(\mcE)} \sign(\tau) \prod_{f\in \mcE} M\left(f,\,\tau(f)\right)
\end{equation}
where
\[
    K(\mcE)\coloneqq\prod_{v\in V}K(\mcE_v), \quad K(\mcE_v)\coloneqq (-1)^{|\mcE_v|}\binom{|\mcE_{v}|}{k_v},
\]
$M=M_{E(V)}$, and $k_v \in\N$ for all $v\in V$.
\end{theorem}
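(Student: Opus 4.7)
The plan is to expand the Grassmannian observable $\prod_{v\in V}X_v^{k_v}Y_v$ into a single linear combination of monomials $\prod_{e\in\mcE}\zeta(e)$, take the fermionic expectation via Proposition~\ref{prop:yes_no} to obtain determinants of the transfer-current matrix $M$, and then apply the Möbius-type inversion formula for cumulants, recognising that only ``connected'' permutations survive.

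First I expand $Y_v=\sum_{F_v\subseteq E_v}(-1)^{|F_v|}\prod_{e\in F_v}\zeta(e)$ and write $X_v^{k_v}$ as the sum over $k_v$-subsets of $E_v$ from~\eqref{eq:defX}. The nilpotency $\zeta(e)^2=0$ forces the two index sets in $X_v^{k_v}Y_v$ to be disjoint, so regrouping the product by the union $\mcE_v$ of these sets yields $\binom{|\mcE_v|}{k_v}$ equal contributions with sign $(-1)^{|\mcE_v|-k_v}$. The good-set hypothesis forces $\{E_v\}_{v\in V}$ to be pairwise disjoint, so multiplying across $v$ produces no further cancellations:
\begin{equation*}
\prod_{v\in V}X_v^{k_v}Y_v=(-1)^{\sum_v k_v}\sum_{\mcE:\,|\mcE_v|\ge k_v\,\forall v}K(\mcE)\prod_{e\in\mcE}\zeta(e).
\end{equation*}
Applying Proposition~\ref{prop:yes_no} with $G=\emptyset$ (equivalently Theorem~\ref{thm:sportiello}) gives $\Ef{\prod_{e\in\mcE}\zeta(e)}=\det(M)_{\mcE}$, and Leibniz' formula expands this determinant as $\sum_{\tau\in S_{\mcE}}\sign(\tau)\prod_{f}M(f,\tau(f))$. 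The analogous identity holds with $V$ replaced by any $B\subseteq V$, giving closed-form moment expressions for every product $\E[\prod_{v\in B}X_v^{k_v}Y_v]$.

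Next I would feed these moments into the cumulant inversion formula of Definition~\ref{def:cum_mult}. For a partition $\pi\in\Pi(V)$, the product $\prod_{B\in\pi}\E[\cdots]$ reorganises as a single sum over pairs $(\mcE,\sigma)$ with $\sigma\in S_\mcE$ whose cycles are each confined to some $\bigcup_{v\in B}E_v$, $B\in\pi$. Associating to each $\sigma$ its natural partition $\pi(\sigma)\in\Pi(V)$ (two vertices are equivalent when a cycle of $\sigma$ meets both of their incident edge sets), the constraint becomes $\pi\ge\pi(\sigma)$. Swapping the order of summation,
\begin{equation*}
\kappa\!\left(X_v^{k_v}Y_v:v\in V\right)=(-1)^{\sum_v k_v}\sum_\mcE K(\mcE)\sum_{\sigma\in S_\mcE}\sign(\sigma)\prod_{f\in\mcE}M(f,\sigma(f))\left[\sum_{\pi\ge\pi(\sigma)}(|\pi|-1)!(-1)^{|\pi|-1}\right].
\end{equation*}

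The main obstacle is the Möbius identity for the inner bracket, namely $\sum_{\pi\ge\pi_0}(|\pi|-1)!(-1)^{|\pi|-1}=\delta_{\pi_0,\{V\}}$, which is the classical combinatorial statement that cumulants extract the connected part of moments and can be verified either by Möbius inversion on the partition lattice or by induction on the number of blocks of $\pi_0$. After this collapse, only permutations with $\pi(\sigma)=\{V\}$ survive, which should coincide with the connected permutations $S_{\co}(\mcE)$ to be introduced in Subsection~\ref{subsec:graphs-and-permutations}. The remaining combinatorial step is to check that the graph-theoretic definition of $S_{\co}(\mcE)$ matches the partition-theoretic one above; once this matching is established, formula~\eqref{eq:thm:maincum3} follows immediately.
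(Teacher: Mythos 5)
Your proposal is correct and follows essentially the same route as the paper: expand $\prod_v X_v^{k_v}Y_v$ into monomials $\prod_{e\in\mcE}\zeta(e)$ with multiplicity $(-1)^{|\mcE_v|-k_v}\binom{|\mcE_v|}{k_v}$, evaluate moments as determinants of $M$ via Wick, and collapse the partition sum in the moment-to-cumulant formula to connected permutations through the M\"obius identity $\sum_{\pi\ge\pi_0}(|\pi|-1)!(-1)^{|\pi|-1}=\delta_{\pi_0,\{V\}}$. The only cosmetic difference is that you regroup by $\mcE$ before applying the cumulant inversion whereas the paper does so afterwards, and you spell out the M\"obius collapse that the paper delegates to the proof of Theorem 3.5 in \citet{cipriani2023fermionic}.
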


\begin{remark}
The reader might be wondering why we work with cumulants instead of moments in this case, which in view of Theorem~\ref{thm:degree_fgff} it seems to only introduce complications. The reason for this is that cumulants are independent of the mean, which allows us to obtain a limiting result in the next theorem without the need of renormalizing.
\end{remark}

Let $\alpha\in \{0,\,\dots,\,p - 1\}$, where $p$ is the number of edges contained in any two dimensional plane generated by any two edges incident on any $v\in V$; that is, $4$ for the hypercubic lattice in $d$ dimensions, $6$ for the triangular lattice and $3$ for the hexagonal one. Let $\gamma_\alpha \coloneqq \cos \left(2\pi\alpha/p\right)$. This next theorem is a generalization of~\citet[Theorems 3.6 and 5.1.2]{cipriani2023fermionic} when $k_v=1$ for all $v\in V$. We unify their statements and proofs in one theorem.

\begin{theorem}[Scaling limit of the cumulants of $\mathbf{X^{k} \, Y}$]\label{thm:cum_cont}
Let $n\ge 2$, $V\coloneqq \left\{v_1,\,\dots,\,v_n\right\}\subseteq U$ be a good set such that $\dist(V,\,\partial U)>0$, and $\mathbf L$ the lattice $\Z^d$ or $\mathbf T$. Let $\left(\left(X_v^{k_v}\right)^\eps Y_v^\eps\right)_v$ be defined on $U_\eps = U/\eps \cap \mathbf L$. If $v_i \neq v_j$ for all $i\neq j$, then
\begin{align}\label{eq:cum_limit3}
    \tilde\kappa(v_1,\,\dots,\,v_n) &\coloneqq \lim_{\eps\to 0} \eps^{-dn} \kappa\left(\left(X_v^{k_v}\right)^\eps \, Y_v^\eps:\,v\in V\right) 
	\\ &= - \left[\prod_{v\in V}C_\mathbf{L}^{(k_v)}\right] \sum_{\sigma\in S_{\cycl}(V)} \sum_{\eta:\,V\to \{\tilde e_1,\,\ldots,\,\tilde e_d\}} \prod_{v\in V} \partial_{\eta(v)}^{(1)}\partial_{\eta(\sigma(v))}^{(2)} g_U\left(v,\, \sigma(v)\right),
\end{align}
where the constants $C_\mathbf{L}^{(k_v)}$ are given by
\begin{align}\label{eq:const_C}
    C_\mathbf{L}^{(k_v)} = (-1)^{k_v+1} \, c_{\mathbf L} \sum_{\substack{\mcE\in E_o:\, \mcE\ni e_1\\|\mcE|\geq k_v}}(-1)^{|\mcE_v|}\binom{|\mcE|}{k_v} \left[ \det\left(\overline M \right)_{\mcE\setminus{\{e_1\}}} - \sum_{\alpha=1}^{p-1} \gamma_\alpha \1_{\{e_{1+\alpha}\in \mcE\}}  \det\big(\overline M^\alpha\big)_{\mcE\setminus{\{e_1\}}}\right],
\end{align}
where $c_{\Z^d} = 2$ for all $d\geq2$, $c_{\mathbf T} = 3$, and for any $f,\,g\in E_v$
\[
\overline M(f,\,g) =
\nabla_{\eta^*(f)}^{(1)}\nabla_{\eta^*(g)}^{(2)}G_{0}(f^{-},\,g^-)
\]
and
\begin{equation}\label{eq:Malpha}
\overline M^\alpha(f,\,g) =
\begin{cases}
\overline{M}(e_1,\,g) & \text{ if } f = e_{1+\alpha},\\
\overline{M}(f,\,g) & \text{ if } f \neq e_{1+\alpha}.
\end{cases} 
\end{equation}
\end{theorem}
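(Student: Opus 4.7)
The plan is to start from the discrete formula of Theorem~\ref{thm:cum_discrete}, rescale it by $\eps^{-dn}$, and identify exactly the terms that survive in the limit. There are three ingredients to handle separately: the asymptotics of discrete second differences of the Green's function on $U_\eps$, a combinatorial selection of the permutations $\tau \in S_{\co}(\mcE)$ that contribute at leading order, and the localisation of the ``local'' contribution at each vertex to yield the constant $C_{\mathbf L}^{(k_v)}$.

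First I would separate the product $\prod_{f \in \mcE} M(f,\tau(f))$ into ``non-local'' factors (with $f^- \neq \tau(f)^-$) and ``local'' factors ($f^- = \tau(f)^-$). Standard Green's function estimates (see \citet{lawler2013intersections} for $\Z^d$, with analogous potential-kernel results for $\mathbf T$ in $d=2$) yield, for distinct macroscopic points $v,w \in V$,
\[
\nabla^{(1)}_{\eta^*(f)}\nabla^{(2)}_{\eta^*(g)} G_{U_\eps}(v_\eps,w_\eps) = \eps^{d}\,\partial^{(1)}_{\eta^*(f)}\partial^{(2)}_{\eta^*(g)} g_U(v,w) + o(\eps^d),
\]
so each non-local factor carries a factor $\eps^d$. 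Meanwhile, a local factor $M(f,g)$ with $f^-,g^- = v_\eps$ converges to $\overline M(f,g)$ as $\eps\to 0$, because the non-singular (harmonic) part of $G_{U_\eps}$ has vanishing discrete second differences at lattice scale.

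Next I would identify which configurations $(\mcE,\tau)$ contribute at order $\eps^{dn}$. Since $\tau\in S_{\co}(\mcE)$ is connected, the multigraph on $V$ induced by the non-local pairs is connected, so there are at least $n-1$ non-local factors; but a bijection whose induced vertex multigraph is a tree forces an unmatchable count of outgoing/incoming half-edges at each vertex, so at least $n$ non-local factors are required. To realise exactly $n$ non-local factors the induced vertex map must be a single $n$-cycle $\sigma \in S_{\cycl}(V)$, with exactly one outgoing edge $f_v$ and one incoming edge $g_v$ at each $v$ and all other edges of $\mcE_v$ pairing locally among themselves. Consequently $\sign(\tau)$ factors into the sign of the $n$-cycle on $V$ times products of signs of local sub-permutations.

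For a fixed $\sigma$ and fixed choice $(f_v,g_v)$ at each vertex, the local factor at $v$ becomes $\det(\overline M)$ over the remaining entries of $\mcE_v\setminus\{f_v,g_v\}$. Summing over $\mcE_v$ with the weight $K(\mcE_v)$ of Theorem~\ref{thm:cum_discrete}, over $g_v\in E_v$, and converting each lattice direction $e_{1+\alpha}$ appearing in the non-local factor into the coordinate basis $\{\tilde e_1,\ldots,\tilde e_d\}$ produces a decomposition whose $\alpha=0$ part reproduces $\det(\overline M)_{\mcE\setminus\{e_1\}}$ and whose $\alpha\neq 0$ parts produce the terms weighted by $\gamma_\alpha=\cos(2\pi\alpha/p)$ coming from the inner product of $e_1$ with $e_{1+\alpha}$. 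The multiplicative constant $c_{\mathbf L}$ accounts for the number of equivalent outgoing lattice directions that, after projection to the coordinate basis, give the same continuum partial derivative. Combining these yields exactly $C_{\mathbf L}^{(k_v)}$ from~\eqref{eq:const_C}, and the non-local factor becomes $\partial^{(1)}_{\eta(v)}\partial^{(2)}_{\eta(\sigma(v))} g_U(v,\sigma(v))$ with $\eta(v)\in\{\tilde e_1,\ldots,\tilde e_d\}$.

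The main obstacle will be sign bookkeeping: verifying that the global prefactor $(-1)^{\sum_v k_v}$ in Theorem~\ref{thm:cum_discrete}, the sign of the cyclic permutation on $V$, the internal signs of the local determinants, and the weights $(-1)^{|\mcE_v|}\binom{|\mcE_v|}{k_v}$ combine to give precisely the overall minus sign in~\eqref{eq:cum_limit3} together with the $(-1)^{k_v+1}$ inside each $C_{\mathbf L}^{(k_v)}$. A secondary difficulty is obtaining the asymptotics of discrete derivatives of $G_{U_\eps}$ uniformly across the lattices considered, especially the logarithmic case $d=2$ on $\mathbf T$, where the potential kernel $a(\cdot)$ and the degree $\deg_{\mathbf L}=6$ modify constants (reflected in $c_{\mathbf T}=3$) without altering the structure of the limit.
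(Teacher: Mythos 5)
Your overall architecture matches the paper's: start from Theorem~\ref{thm:cum_discrete}, use the $\eps^{d}$ scaling of the inter-vertex double gradients of the Green's function to reduce the sum to connected permutations with exactly $n$ non-local edges (the paper's \emph{bare} permutations, whose induced vertex map is a single cycle $\sigma\in S_{\cycl}(V)$), and then factor the limit into a global term times per-vertex constants. Two points in the local analysis, however, are genuine gaps. First, the local contribution at $v$ is not a principal minor $\det(\overline M)_{\mcE_v\setminus\{f_v,\,g_v\}}$ of size $|\mcE_v|-2$: since $\tau$ enters $v$ through $\eta(v)$ and exits through $\eta^\alpha(v)$, its restriction to $\mcE_v$ is a bijection from $\mcE_v\setminus\{\eta^\alpha(v)\}$ onto $\mcE_v\setminus\{\eta(v)\}$, so the signed sum over local configurations is a mixed $(|\mcE_v|-1)\times(|\mcE_v|-1)$ minor. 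The paper encodes this, after a ``surgery'' identifying the entry and exit edges (Lemma~\ref{lem:surg_tau_trig}), as $\det\big(\overline M^\alpha\big)_{\mcE_v\setminus\{e_1\}}$ with one row replaced; that is precisely the object appearing in~\eqref{eq:const_C}, and your smaller principal minor would not reproduce it.

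Second, and more seriously, the factor $\gamma_\alpha=\cos(2\pi\alpha/p)$ does not come from merely rewriting the exit direction in the coordinate basis. Linearity of the continuum directional derivative gives $\partial_{\eta^\alpha(v)}=\sum_i\langle\eta^\alpha(v),\,\tilde e_i\rangle\,\partial_{\tilde e_i}$, but this retains the components of the exit direction \emph{orthogonal} to the entry direction $\eta(v)$; these cross terms couple the local and global sums and obstruct the factorization into $\prod_v C_{\mathbf L}^{(k_v)}$ times a global term in which the incoming and outgoing derivative directions at each $v$ coincide. The paper's Step 3 resolves this by pairing each $\tau$ with its reflection across the line of $\eta(v)$: the reflection preserves $K(\mcE)$, $\sign(\tau)$ and all local factors, while the two non-local factors add to $2\gamma_\alpha\,\overline M(\eta(v),\,\eta(\sigma(v)))$, so the orthogonal components cancel. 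This symmetrization is the key cancellation of the entire proof, and your plan asserts its conclusion without supplying it (or an equivalent symmetry argument showing that the vector-valued sum of local weights times exit directions is parallel to $\eta(v)$). The remaining ingredients you list — the $\eps^d$ asymptotics, the parity argument forcing $n$ non-local edges, and the sign bookkeeping — are consistent with the paper's Steps 1, 2 and 4.
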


\begin{remark}
As we will see in the proof, the same techniques are immediately generalizable to the hexagonal lattice; that is, for $\mathbf L = \mathbf H$. However, that case requires more care, since we have to account for the two types of vertices in that lattice. We believe an adaptation of the proof to that case only adds obscurity to the matter, but nonetheless it can still be done, yielding the same expression with $p=3$ and $c_{\mathbf H} = 3/2$.
\end{remark}

\begin{remark}
After the proof of this theorem, on page~\pageref{tab:constant_values} we provide a table with the explicit values of $C_{\mathbf L}^{(k)}$ for $\Z^2$, $\mathbf T$ and $\mathbf H$. The reader will observe that \mbox{$C_{\mathbf H}^{(2)} = 0$}, which means that any cumulant involving $k_v=2$ at any $v$ automatically vanishes on the hexagonal lattice.
\end{remark}

\paragraph{What about neighboring points?}\label{not_good}

A natural question that arises is whether we can relax the good set condition on the set $V$ in theorems~\ref{thm:degree_fgff} and~\ref{thm:cum_discrete}. The answer is yes, as we explain below.

Let $\mathcal G = (\Lambda,\,E)$ be any graph, $T$ a realization of the UST distribution, and $v\sim w\in\Lambda$. Then
\begin{multline*}
    \P\left(D_v = k_v,\, D_w = k_w\right) =\\ \P\left(D_v = k_v,\, D_w = k_w,\, \{v,\,w\}\in T\right) + \P\left(D_v = k_v,\, D_w = k_w,\, \{v,\,w\}\notin T\right) .
\end{multline*}
As we saw in Remark~\ref{rmk:fermions}, the condition $\{v,\,w\}\in T$ translates, in the fermionic language, to introducing the multiplicative factor $\zeta(\{v,\,w\})$, whereas for $\{v,\,w\}\notin T$ we need to introduce $1-\zeta(\{v,\,w\})$. In view of Theorem~\ref{thm:cum_discrete}, we have
\begin{multline*}
    \kappa\left(X_v^{k_v}Y_v,\, X_w^{k_w}Y_w,\, \zeta(\{v,\,w\})\right) = (-1)^{k_v+k_w}
	\sum_{\substack{|\mcE_v|\ge k_v-1\\\{v,\,w\}\notin \mcE_v}} \, \sum_{\substack{|\mcE_w|\ge k_w-1\\\{v,\,w\}\notin \mcE_w}} (-1)^{|\mcE_v|+|\mcE_w|} \times \\ \binom{|\mcE_v|}{k_v-1}\binom{|\mcE_w|}{k_w-1}
	\sum_{\tau\in S_{\co}(\mcE)} \sign(\tau) \prod_{f\in \mcE} M\left(f,\,\tau(f)\right).
\end{multline*}
Equivalently,
\begin{multline*}
    \kappa\left(X_v^{k_v}Y_v,\, X_w^{k_w}Y_w,\, 1-\zeta(\{v,\,w\})\right) = (-1)^{k_v+k_w}
	\sum_{\substack{|\mcE_v|\ge k_v\\\{v,\,w\}\notin \mcE_v}}\sum_{\substack{|\mcE_w|\ge k_w\\\{v,\,w\}\notin \mcE_w}} (-1)^{|\mcE_v|+|\mcE_w|} \times \\ \binom{|\mcE_v|}{k_v}\binom{|\mcE_w|}{k_w}
	\sum_{\tau\in S_{\co}(\mcE)} \sign(\tau) \prod_{f\in \mcE} M\left(f,\,\tau(f)\right).
\end{multline*}
With these expressions we can calculate the moments that give the sought-after probabilities. This is immediately generalized to the case of an arbitrary finite amount of points.


\paragraph{Complete graphs}\label{complete_graphs}

It is shown in \citet[Theorem 1.3]{pemantle} that, for any complete graph $K_n$ with $n$ vertices, as $n$ goes to infinity the degree of the UST at any vertex $v$ converges in distribution to a random variable $1+\mathcal P(1)$, being $\mathcal P(1)$ a Poisson variable with parameter $1$. This can also be obtained as a corollary from our Theorem~\ref{thm:cum_discrete} in a much shorter way, as follows:

\begin{theorem}[{\citet[Theorem 1.3]{pemantle}}]
    Let $K_n$ be a complete graph with $n$ vertices, and let $V(K_n)$ be its vertex set. For any $v\in V(K_n)$ it holds that
    \[
        D_v \xrightarrow[n\to\infty]{\mathrm{dist}} 1 + \mathcal P(1),
    \]
    with $\mathcal P(1)$ a Poisson random variable with parameter $1$.
\end{theorem}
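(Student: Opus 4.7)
The plan is to apply Theorem~\ref{thm:degree_fgff} with $V=\{v\}$ to obtain an explicit formula for $\P(D_v=k)$ on $K_n$, exploit the high symmetry of the complete graph to compute the transfer-current matrix $M$ restricted to $E_v$, and then perform a direct asymptotic analysis as $n\to\infty$.

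First, specializing Theorem~\ref{thm:degree_fgff} to a single vertex gives $\P(D_v=k)=\Ef{X_v^{k}Y_v}$. Expanding $Y_v=\prod_{e\in E_v}(1-\zeta(e))$ and applying Proposition~\ref{prop:yes_no} to each resulting term (equivalently, invoking the single-vertex instance of Theorem~\ref{thm:cum_discrete}, where every permutation of $\mcE$ is ``connected'' so the signed sum collapses to a determinant) yields
\[
\P(D_v=k) \;=\; (-1)^k \sum_{\substack{\mcE\subseteq E_v\\|\mcE|\geq k}} (-1)^{|\mcE|}\binom{|\mcE|}{k}\det(M)_{\mcE}.
\]

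Second, on $K_n$ the reduced Laplacian $-\Delta_{\Lambda\setminus\{r\}}=nI-J$ (with $r\neq v$ an arbitrary root) inverts explicitly to give the wired Green's function $G(u,u')=\tfrac{1}{n}(1+\delta_{u,u'})$ for $u,u'\neq r$, and $G\equiv 0$ otherwise. Plugging into Definition~\ref{def:M} and checking the handful of cases, one obtains $M(e,e)=2/n$ and $M(e,f)=1/n$ for distinct $e,f\in E_v$, so that $M|_{E_v}=\tfrac{1}{n}(I_{n-1}+J_{n-1})$. The standard eigenvalue identity for matrices of the form $aI+bJ$ then gives $\det(M)_{\mcE}=(j+1)/n^{j}$ for every $\mcE$ with $|\mcE|=j$.

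Third, substituting back, using $\binom{n-1}{j}\binom{j}{k}=\binom{n-1}{k}\binom{n-1-k}{j-k}$, reindexing by $i=j-k$, and evaluating the elementary identities $\sum_{i}(-1)^{i}\binom{m}{i}x^{i}=(1-x)^{m}$ and $\sum_{i}(-1)^{i}\binom{m}{i}i\,x^{i}=-mx(1-x)^{m-1}$ at $x=1/n$ with $m=n-1-k$, the algebra collapses to the classical expression
\[
\P(D_v=k) \;=\; \binom{n-2}{k-1}\frac{(n-1)^{n-1-k}}{n^{n-2}}.
\]
For fixed $k\ge 1$, Stirling gives $\binom{n-2}{k-1}\sim n^{k-1}/(k-1)!$ and $(n-1)^{n-1-k}/n^{n-1-k}=(1-1/n)^{n-1-k}\to e^{-1}$, so $\P(D_v=k)\to e^{-1}/(k-1)!=\P(1+\mathcal P(1)=k)$. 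Since $D_v$ takes values in $\N$, pointwise convergence of the mass function implies convergence in distribution. The main obstacle is the bookkeeping in the binomial simplification of step three, where several sign and index cancellations must be tracked carefully; everything else follows directly from the results already established in the paper together with the symmetry of $K_n$.
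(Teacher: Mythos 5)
Your proposal is correct and follows essentially the same route as the paper: both specialize the fermionic degree formula to a single vertex to get $\P(D_v=k)=(-1)^k\sum_{\mcE\subseteq E_v,\,|\mcE|\ge k}(-1)^{|\mcE|}\binom{|\mcE|}{k}\det(M)_{\mcE}$, use $M(e,e)=2/n$, $M(e,f)=1/n$ so that $\det(M)_{\mcE}=(|\mcE|+1)/n^{|\mcE|}$, simplify the binomial sum, and pass to the limit. The only cosmetic differences are that you derive $M$ from the reduced Laplacian $nI-J$ rather than quoting Pemantle, and your algebra lands on the classical closed form $\binom{n-2}{k-1}(n-1)^{n-1-k}/n^{n-2}$, which is equivalent to (and arguably tidier than) the paper's intermediate expression.
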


\begin{proof}[Alternative simpler proof]
From Theorem~\ref{thm:cum_discrete}, for $k = 1,\,\ldots,\,n$ we have that
\[
    \P(D_v = k) = (-1)^k \sum_{\mcE\in E_v:\,\mcE\geq k} (-1)^{|\mcE|} \binom{|\mcE|}{k} \det(M)_\mcE .
\]
According to \citet{pemantle}, the matrix $M$ for a complete graph $K_n$ is given by
\[
    M(e,\,f) = 
    \begin{cases}
        2/n & \text{if } e = f,\\
        1/n & \text{if } e\neq f.            
    \end{cases}
\]
Straightforward calculations then yield
\[
    \det(M)_\mcE = \frac{1+|\mcE|}{n^{|\mcE|}} .
\]
This way,
\[
    \P(D_v = k) = (-1)^k \sum_{\mcE\in E_v:\,\mcE\geq k} (-1)^{|\mcE|} \binom{|\mcE|}{k}  \frac{1+|\mcE|}{n^{|\mcE|}} = (-1)^k \sum_{k'=k}^{n-1} \binom{n-1}{k'} (-1)^{k'} \binom{k'}{k}  \frac{1+k'}{n^{k'}} .
\]
After algebraic manipulations,
\[
    \P(D_v = k) = (1 + k) (n-1)^{-(2+k)} \left(\frac{n-1}{n}\right)^n n \left[n \binom{n-1}{k} - \binom{n}{1 + k}\right].
\]
Taking the limit $n\to\infty$,
\[
    \P(D_v = k) \xrightarrow{n\to\infty} \frac{e^{-1}}{(k-1)!}, \quad k\geq 1,
\]
which exactly matches the distribution of a random variable $1+\mathcal P(1)$.
\end{proof}

\begin{remark}
    As~\citet[Section 5.2]{pemantle} mentions, this result holds for a more general set of graphs, which the author calls \emph{Gino-regular graphs}, and the proof follows in the same way. A sequence of graphs $(\mathcal G_n)_n$ is called Gino-regular if there exists a sequence of positive integers $(D_n)_n$ such that
    \begin{enumerate}[label=(\roman*)]
        \item the maximum and minimum degree of any vertex in $\mathcal G_n$ behaves as $(1+ o(1)) D_n$ as $n\to\infty$, and
        \item the maximum and minimum over vertices $x,\,y,\,z,\,x\neq y$ of $\mathcal G_n$ of the probability that a symmetric random walk on $\mathcal G_n$ started at $x$ hits $y$ before $z$ behaves as $1/2 + o(1)$ as $n\to\infty$,
    \end{enumerate}
    where by $o(1)$ we intend a quantity that vanishes as $n\to\infty$. The set of complete graphs $(K_n)_n$ satisfy these conditions, and so do the $n$-cubes.

    This type of graphs allow for an asymptotic calculation of the determinant of $M$, so that in the limit we obtain the same results as in the case of the complete graph.
\end{remark}

\section{Proofs of Theorems~\ref{thm:degree_fgff}, \ref{thm:cum_discrete} and~\ref{thm:cum_cont}}\label{sec:proofs}

\subsection{Proof of Theorem~\ref{thm:degree_fgff}}

The first equality is trivial from the fact that $\P(X\in A) = \E\left[\1_{\{X\in A\}}\right]$ for any random variable $X$ and any measurable set $A$. Let us then prove the second equality, starting with a simple lemma.

\begin{lemma}\label{lem:degree_kv}
The degree-$k$ fields satisfy
\begin{equation}
\E\left[\prod_{v\in V}\delta^{(k_v)}_v\right] = \sum_{\substack{\eta:\,V\to2^{E_o}\\ |\eta(v)|= k_v\; \forall v\in V}} \P\big(\left\{e \in T \ \forall \, e \in \eta(V)\right\}\, \cap\, \left\{e' \notin T \ \forall \, e'\in E(V) \setminus \eta(V)\right\}\big),
\end{equation}
where $\eta(V)$ is an abuse of notation for $\cup_{v\in V}\eta(v)$.
\end{lemma}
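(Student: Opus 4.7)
The strategy is a direct combinatorial decomposition of the indicator $\delta_v^{(k_v)} = \mathbf 1_{\{D_v = k_v\}}$, followed by exchanging sum and product, and then passing the expectation inside.

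First I would observe that for each fixed vertex $v$, the event $\{D_v = k_v\}$ is the disjoint union, over subsets $\mcE_v \subseteq E_v$ with $|\mcE_v|=k_v$, of the event
\[
A(v, \mcE_v)\coloneqq \{e\in T\ \forall\, e\in \mcE_v\}\cap\{e'\notin T\ \forall\, e'\in E_v\setminus \mcE_v\},
\]
since specifying which $k_v$ edges of $E_v$ are in $T$ determines both the count and the configuration on $E_v$. Hence
\[
\delta_v^{(k_v)} = \sum_{\mcE_v\subseteq E_v:\,|\mcE_v|=k_v}\mathbf 1_{A(v,\mcE_v)}.
\]

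Next I would take the product over $v\in V$ and distribute. Because $V$ is a good set, no two vertices in $V$ are neighbours, and therefore the edge sets $\{E_v\}_{v\in V}$ are pairwise disjoint subsets of $E(V)$. Consequently, picking one subset $\mcE_v\subseteq E_v$ of size $k_v$ for each $v\in V$ is exactly the data of a map $\eta:V\to 2^{E_o}$ with $|\eta(v)|=k_v$, via the identification $\mcE_v\leftrightarrow \eta(v)$ (translating edges at $v$ back to edges at the origin). Moreover, the intersection $\bigcap_{v\in V} A(v,\mcE_v)$ becomes precisely
\[
\{e\in T\ \forall\, e\in \eta(V)\}\cap\{e'\notin T\ \forall\, e'\in E(V)\setminus \eta(V)\},
\]
because the disjointness of the $E_v$'s means that the ``positive'' and ``negative'' conditions coming from different $v$'s do not overlap, and together they exhaust $E(V)$.

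Finally, taking expectations and exchanging sum and expectation (the sum being finite) gives the claimed identity. The only delicate point is the disjointness of the $E_v$'s, which is where the good-set hypothesis is actually used: if $v\sim w$ were allowed, then the shared edge $\{v,w\}$ would be constrained twice (potentially in contradictory ways), and the clean union $\eta(V) \sqcup (E(V)\setminus \eta(V)) = E(V)$ would break down. This is precisely the technical obstruction addressed later in Section~\ref{not_good} by introducing extra $\zeta(\{v,w\})$ or $1-\zeta(\{v,w\})$ factors. Apart from this bookkeeping, the proof is essentially a tautology.
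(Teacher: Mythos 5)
Your proof is correct and follows essentially the same route as the paper, which simply declares the identity immediate from $\E\bigl[\prod_i \1_{A_i}\bigr]=\P\bigl(\bigcap_i A_i\bigr)$; you merely spell out the disjoint decomposition of $\{D_v=k_v\}$ over size-$k_v$ subsets of $E_v$ and the role of the good-set hypothesis in making the $E_v$'s disjoint, which the paper leaves implicit. No gap.
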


\begin{proof}
This is immediate from the fact that, for any random variable $X$, any $\mathcal I\subset\N$, and measurable sets $A_i$ with $i\in\mathcal I$, $\E\left[\prod_{i\in \mathcal I}\1_{\{X\in A_i\}}\right] = \P\left(\bigcap_{i\in\mathcal I} A_i\right)$.
\end{proof}

Before we proceed with the proof, let us recall Proposition 4.4 from \citet{cipriani2023fermionic}.

\begin{proposition}\label{thm:swan_UST_fGFF}
Let $\mathcal G=(\Lambda,\, E)$ be a finite graph. For all subsets of edges $S\subseteq E$
\begin{equation}\label{eq:UST-grad-ferm}
    \P(T:\, S\subseteq T) = \Ef{\prod_{f\in S}\zeta(f)}.
\end{equation}
\end{proposition}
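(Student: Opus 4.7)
My plan is to reduce both sides of \eqref{eq:UST-grad-ferm} to the same principal minor of the transfer-current matrix $M$. The left-hand side equals $\det(M)_S$ by the Transfer-Current Theorem of Burton and Pemantle (the formula invoked after Proposition~\ref{prop:yes_no}), so the substance of the proof is to evaluate $\Ef{\prod_{f \in S}\zeta(f)}$ by the complex Wick's theorem (Theorem~\ref{thm:sportiello}, part~\ref{thm_Wick_two}).

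First I would set up the algebra. Write $S = \{f_1, \dots, f_r\}$ and introduce the signed incidence matrix $D$, indexed by $S \times \Lambda$, with entries $D(f, v) = \1_{\{v = f^+\}} - \1_{\{v = f^-\}}$. Then
\[
\psi_{f^+} - \psi_{f^-} = (\psi^T D^T)_f, \qquad \bpsi_{f^+} - \bpsi_{f^-} = (D\bpsi)_f,
\]
so that $\prod_{f \in S} \zeta(f) = \prod_{f \in S} (\psi^T D^T)_f (D\bpsi)_f$. Since every $\zeta(f)$ is Grassmann-even the factors commute, and the product is precisely of the form required by Theorem~\ref{thm:sportiello} with $C = D^T$ and $B = D$. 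Applying part~\ref{thm_Wick_two} with $A = -\Delta_\Lambda$, the factor $\det(-\Delta_\Lambda)$ produced by the Gaussian integration cancels the normalization in the definition of $\Ef{\cdot}$, leaving
\[
\Ef{\prod_{f \in S} \zeta(f)} = \det\bigl(D\, G_\Lambda\, D^T\bigr),
\]
with $G_\Lambda = (-\Delta_\Lambda)^{-1}$. Expanding a typical entry,
\[
(D G_\Lambda D^T)(f, g) = G_\Lambda(f^+, g^+) - G_\Lambda(f^+, g^-) - G_\Lambda(f^-, g^+) + G_\Lambda(f^-, g^-) = \nabla^{(1)}_{\eta^*(f)} \nabla^{(2)}_{\eta^*(g)} G_\Lambda(f^-, g^-) = M(f, g),
\]
so the determinant equals $\det(M)_S$, matching the Transfer-Current side.

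The main obstacle is really bookkeeping rather than a conceptual hurdle: one must confirm that the reordering of the Grassmann product into $\prod_{f} (\psi^T D^T)_f (D\bpsi)_f$ is compatible with the hypothesis of Theorem~\ref{thm:sportiello} (which is immediate from the evenness of each $\zeta(f)$), and that the cancellation of $\det(-\Delta_\Lambda)$ against the normalization in $\Ef{\cdot}$ is carried out correctly. Everything else is an unraveling of definitions.
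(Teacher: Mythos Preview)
Your argument is correct: writing the gradients via the signed incidence matrix $D$ and invoking part~\ref{thm_Wick_two} of Theorem~\ref{thm:sportiello} with $A=-\Delta_\Lambda$, $B=D$, $C=D^\intercal$ gives $\Ef{\prod_{f\in S}\zeta(f)}=\det(DG_\Lambda D^\intercal)=\det(M)_S$, which coincides with $\P(S\subseteq T)$ by the Burton--Pemantle Transfer-Current Theorem. The paper does not give a self-contained proof of this proposition but simply recalls it from \cite{cipriani2023fermionic}; your computation is exactly the natural argument behind that citation, so there is no substantive difference in approach.
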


\begin{proof}[Proof of Theorem~\ref{thm:degree_fgff}]

In view of Lemma~\ref{lem:degree_kv}, take any $\eta:\, V\to 2^{E(V)}$, with $|\eta(v)|= k_v$, $k_v\in\{1,\,\dots,\,\deg_{\mathcal G}(v)\}$, for each $v\in V$. First we observe that
\begin{equation*}
    \bigcap_{v\in V}\left( \left\{\eta(v)\subseteq T \right\}\cap\left(\bigcup_{e\in E_v\setminus\{\eta(v)\}}\{e\in T\}\right)^c\right)=
    \bigcap_{v\in V}\left\{\eta(v)\subseteq T \right\}\cap\left(\bigcup_{e\in E(V)\setminus\{\eta(V)\}}\{e\in T\}\right)^c.
\end{equation*}
By the inclusion--exclusion principle,

\begin{multline}
\P \left(\bigcap_{v\in V}\left( \left\{\eta(v)\subseteq T \right\}\cap\left(\bigcup_{e\in E_v\setminus\{\eta(v)\}}\{e\in T\}\right)^c\right)\right) \\
\begin{aligned}
&= \P \left( \bigcap_{v\in V}\{\eta(v)\subseteq T \}\right) -  \P \left( \bigcap_{v\in V}\left\{\eta(v) \subseteq T\right\}\cap\bigcup_{e\in E(V)\setminus\{\eta(V)\}}\,\{e\in T\}\right) \\
&= \sum_{S \subseteq E(V) \backslash \eta(V)} (-1)^{|S|}\P \left(\bigcap_{v\in V}\{\eta(v) \subseteq T\}\cap (S \subseteq T) \right),\label{eq:fermio_height}
\end{aligned}
\end{multline}
where we sum over the probabilities that the edges of $\eta(V)$ are in the spanning tree $T$ as well as those in $S\subseteq E(V) \backslash \eta(V)$. By Proposition \ref{thm:swan_UST_fGFF}, this becomes

\begin{equation} \label{eq:expansion-XvYv} 
     \sum_{S \subseteq E(V) \backslash \eta(V)} (-1)^{|S|} \Ef{\prod_{\{r,\,s\} \in \eta(V)}\zeta(\{r,\,s\})\prod_{\{u,\,w\}\in S}\zeta(\{u,\,w\})}.
\end{equation}
By the anticommutation relation, the sets of edges $S$ such that $S \cap \eta(V) \neq \emptyset$ do not contribute to~\eqref{eq:expansion-XvYv}. This way,

\begin{multline*}
  \sum_{S \subseteq E(V) } \Ef{\prod_{\{r,\,s\}\in \eta(V)}\zeta(\{r,\,s\})\prod_{\{u,\,w\}\in S}(-1)^{|S|}\zeta(\{u,\,w\})}\\
  \begin{aligned}
  &=\Ef{\prod_{\{r,\,s\}\in \eta(V)}\zeta(\{r,\,s\})\sum_{S\subseteq E(V) }\prod_{\{u,\,w\}\in S}(-1)^{|S|}\zeta(\{u,\,w\})}\\
  &=\Ef{\prod_{\{r,\,s\}\in \eta(V)}\zeta(\{r,\,s\})\prod_{\{u,\,w\}\in E(V) }\big(1-\zeta(\{u,\,w\})\big)}.
  \end{aligned}
\end{multline*}
Observing that the first product is
\[
\prod_{\{r,\,s\}\in \eta(V)}\zeta(\{r,\,s\}) = \prod_{v\in V} \prod_{e\in \eta(v)}\zeta(e)
\]
and summing over all possible such $\eta$'s, we obtain the result.
\end{proof}

\subsection{Permutations, graphs and partitions}\label{subsec:graphs-and-permutations}

In this subsection, we introduce more notation used in Theorem~\ref{thm:cum_discrete} and the proof of Theorem~\ref{thm:cum_cont}.

\paragraph{General definitions.}\label{par:gen_perm} Let $\Lambda$ be a finite and connected (in the usual graph sense) subset of $\mathbf L$ and $V \subseteq \Lambda$ be a good set according to Definition~\ref{def:goodset}. As $V$ is a good set, notice that every edge in $E(V)$ is connected to exactly one vertex in $V$. 

For any finite set $A$ we denote the set of permutations of $A$ by $S(A)$.
Furthermore, we write $S_{\cycl}(A)$ to denote the set of \emph{cyclic} permutations of $A$ (without fixed points). 


\paragraph{Permutations: connected and bare.}\label{par:edge_ver}

We define the multigraph $V_\tau= \left(V,\, E_\tau(V)\right)$ induced by $\tau$ in the following way. For each pair of vertices $v \neq w$ in $V$,  we add one edge between $v$ and $w$ for each $f \in E_v, f' \in E_w$ such that either $\tau (f)= f'$ or $\tau (f')= f$. If  $v=w$, we add no edge, so $\deg_{V_\tau}(v) \le |E_v|$. 

Fix $A\subseteq E(V)$ such that $E_v \cap A \neq \emptyset$ for all $v \in V$, i.e. we have a set of edges with at least one edge per vertex of $V$.
Let $\tau \in S(A)$ be a permutation of edges in $A$.

\begin{definition}[Connected and bare permutations]\label{def:connected-permutations}
	Let $\Lambda \subseteq \mathbf L$ finite, $V$ good as in Definition~\ref{def:goodset}, $|V|\geq2$, $A\subseteq E(V)$ and 
	$\tau \in S(A)$ be given.
	\begin{itemize}
		\item We say that $\tau$ is \emph{connected} if the multigraph $V_\tau$ is a connected multigraph.
  
		\item We say that $\tau$ is \emph{bare} if it is connected and $\deg_{V_\tau}(v)=2$ for all $v\in V$ (it is immediate to see that the latter condition can be replaced by $|E_\tau(V)|=|V|$). 
	\end{itemize}
    If $|V|=1$, as it can happen in Theorem~\ref{thm:cum_discrete}, we consider every permutation $\tau\in S(A)$ as both connected and bare.
 
	We will denote by $S_{\co}(A)$ the set of connected permutations in $S(A)$, and by $S_{\bare}(A)$ the set of bare permutations. See Figures~\ref{fig:connected} and~\ref{fig:bare} for some examples, where the mapping $\tau(f)=f'$ is represented via an arrow $f\to f'.$
\end{definition}

\begin{figure}[ht!]
\begin{minipage}{0.45\textwidth}
    \centering
    \includegraphics[scale=.6]{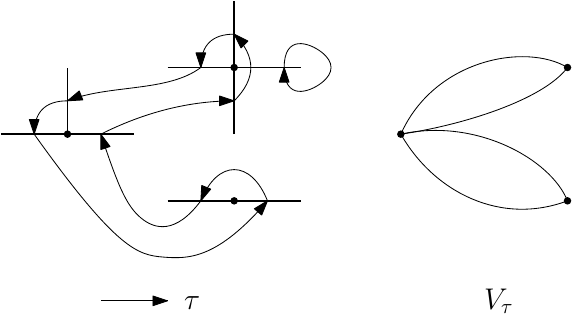}
    \caption{A connected permutation $\tau$ on edges of $\Z^2$ and the multigraph $V_\tau$ associated to it.
    Notice that this permutation is \textbf{not} bare.}
    \label{fig:connected}
\end{minipage}\hfill
\begin{minipage}{0.45\textwidth}
    \centering
    \includegraphics[scale=0.6]{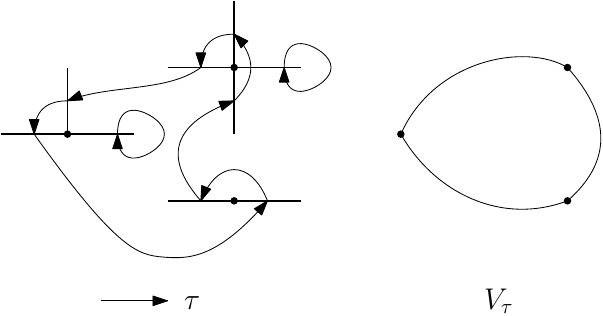}
    \caption{A bare permutation $\tau$ on edges of $\Z^2$ and the multigraph $V_\tau$ associated to it.}
    \label{fig:bare}
\end{minipage}
\end{figure}

For $\tau$ bare we have, by definition, that for each $v$ there are exactly two edges $f,\,f^\prime \in A$ (possibly the same) such that $\tau (f') \not \in E_v$ and $\tau^{-1} (f) \not \in E_v$. We will refer to this as $\tau$ enters $v$ through $f$ and exits $v$ through $f^\prime$. Therefore, for any bare permutation $\tau \in S_{\bare}(A)$, we can define an induced permutation on vertices $\sigma = \sigma_\tau \in S_{\cycl}(V)$ given by $\sigma(v)=w$ if there there exists (a unique) $f \in E_v$ and $f' \in E_w$ such that $\tau(f)=f'$. Figure~\ref{fig:bare_to_sigma} shows an example in $\Z^2$.

\begin{figure}[ht!]
    \centering
    \includegraphics[scale=.6]{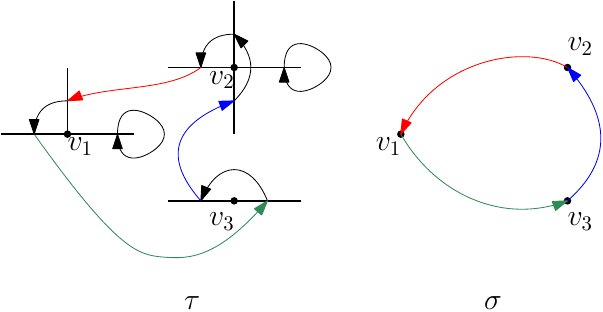}
    \caption{A bare permutation $\tau$ on edges and the induced permutation $\sigma$ on points, in $d=2$.}
    \label{fig:bare_to_sigma}
\end{figure}


\subsection{Proof of Theorem~\ref{thm:cum_discrete}}

\begin{proof}[Proof of Theorem~\ref{thm:cum_discrete}]
Call $Z_v^{(k_v)}\coloneqq X_v^{k_v}\, Y_v$. Using the same arguments as in the proof of \citet[Theorem 3.5]{cipriani2023fermionic} we get
\begin{multline*}
\kappa\left(Z_{v_1}^{(k_{v_1})},\,\dots,\,Z_{v_n}^{(k_{v_n})}\right) 
	= \\
	\sum_{\eta}
	\sum_{A} (-1)^{|A|}
	\sum_{\pi\in\Pi(V)} \left(|\pi|-1\right)\!!\, (-1)^{|\pi|-1} \prod_{B\in\pi} \sum_{\tau\in S(E_B)} \sign(\tau) \prod_{f\in E_B} M\left(f,\,\tau(f)\right),
\end{multline*}
where the sum over $\eta$'s is over all functions $\eta:\, V \to E(V)$ with $\eta(v)\in E_v $ for all $v$, the sum over $A$'s is over the subsets of $A \subseteq {E}(V) \setminus \eta(V)$, and $E_B=E_B(\eta,\,A)$ is the set of edges in $\eta(V) \cup A$ that intersect sites of $B$.

Notice that $|A|=|\eta(B) \cup A| - \sum_v k_v$. Therefore, the sum above only depends on $\eta$ and $A$ through $\eta(B) \cup A$. We then denote $\mcE=E(\eta,\,A) \coloneqq \eta(V) \cup A$ and recall $\mcE_B = \{f\in \mcE:\, \{f^-\}\cap  B\neq \emptyset\}$. For $v\in V$ we will simplify notation by writing $\mcE_{v}$ rather than $\mcE_{\{v\}}$.

We notice that for a fixed $\mcE$ there are $ \prod_{v \in V} \binom{|\mcE_{v}|}{k_v}$ choices for $\eta(V)$ and $A$ yielding the same $\mcE$, so the sum above can be written as
\begin{multline*} 
\kappa\left(Z_v^{(k_v)}:\, v \in V\right) 
= \\
(-1)^{\sum_v k_v} \sum_{\mcE:\, |\mcE_{v}| \ge k_v\; \forall v} K(\mcE) \sum_{\pi\in\Pi(V)} 
\left(|\pi|-1\right)\!!\, (-1)^{|\pi|-1} \prod_{B\in\pi} \sum_{\tau\in S(\mcE_B)} \sign(\tau) 
\prod_{f\in \mcE_B} M\left(f,\,\tau(f)\right).
\end{multline*}

The sum over partitions $\Pi(V)$ can again be treated in much the same way as \citet[Theorem 3.5]{cipriani2023fermionic}, yielding
\[
	\kappa\left(Z_v^{(k_v)}:\, v \in V\right) 
	= 
	(-1)^{\sum_v k_v}
	\sum_{\mcE:\, |\mcE_{v}|\ge k_v \; \forall v} K(\mcE) 
	\sum_{\tau\in S_{\co}(\mcE)} \sign(\tau) \prod_{f\in \mcE} M\left(f,\,\tau(f)\right)
\]
as we wanted to show.
\end{proof}

\subsection{Proof of Theorem~\ref{thm:cum_cont}}

\begin{proof}[Proof of Theorem~\ref{thm:cum_cont}]

We will do a general proof that works for both $\mathbf L = \Z^d$ and $\mathbf L = \mathbf T$ (and $\mathbf H$ with an exception that we will mention below). The proof is divided into four steps.
In \ref{step1}, we start from the final expression obtained in Theorem~\ref{thm:cum_discrete} and show that it suffices to sum over only bare permutations $\tau$, instead of the bigger set of connected permutations.
In~\ref{step2}, we write the expression in terms of contributions of the permutations acting locally in the vicinity of a vertex and globally mapping an edge incident to one vertex to an edge which is incident to another vertex.
In~\ref{step3} we argue that, given a permutation $\tau$ on edges and an entry edge for any given point $v\in V$, only the projection of the exit edge onto the entry edge will contribute to the final expression, so we can treat the former as a new edge in the direction of the entry one, weighed by its projection.
Finally, in~\ref{step4}, we identify the global multiplicative constant of the cumulants.

\begin{enumerate}[wide, labelindent=0pt, label={\bf Step \arabic*.}, ref=Step \arabic*]

\item\label{step1}
From Theorem~\ref{thm:cum_discrete} we start with the expression
\[
	\kappa\left(\left(Z_v^{(k_v)}\right)^\eps:\, v \in V\right) 
	= 
	(-1)^{\sum_v k_v}
	\sum_{\mcE:\, |\mcE_{v_\eps}|\ge k_v \; \forall v} K(\mcE) 
	\sum_{\tau\in S_{\co}(\mcE)} \sign(\tau) \prod_{f\in \mcE} M\left(f,\,\tau(f)\right).
\]
This step is practically identical to Step 1 in the proof of Theorem 3.6 in~\citet{cipriani2023fermionic}, since it does not depend on $k_v$, so we omit the whole derivation. It is obtained that, in the limit $\eps\to0$, only \emph{bare} permutations contribute to the final result, obtaining the expression
\begin{equation}\label{eq:now_bare}
	(-1)^{\sum_v k_v}
	\sum_{\mcE:\, |\mcE_{v}|\ge k_v \; \forall v}  K(\mcE) 
	\sum_{\tau\in S_{\bare}(\mcE)} \sign(\tau) \prod_{f\in \mcE} \overline M\left(f,\,\tau(f)\right),
\end{equation}
where we use the notation
\begin{equation}\label{eq:def_bar_M}
\overline{M}(f,\,\tau(f)) = 
\begin{dcases}
    \hfil \nabla^{(1)}_{e_i}\nabla^{(2)}_{e_j}G_0(o,\,o)&\text{if } f^- = \tau(f)^-,\\ 
    \partial^{(1)}_{e_i}\partial^{(2)}_{e_j} g_U\left(v,\, v'\right)  &\text{if } f^-=v_\eps \neq v'_\eps=\tau(f)^-,\ v,\,v'\in V
\end{dcases}
\end{equation}
whenever $\eta^*(f)=e_i$ and $\eta^*(\tau(f)) = e_j$ for some $e_i,\,e_j\in E_o$.

\begin{remark}
In the hexagonal lattice there are two types of points: those with edges at $0$, $2\pi/3$ and $4\pi/3$ degrees, and those with edges at $\pi/3$, $\pi$ and $5\pi/6$ degrees. Following the proof in~\citet{cipriani2023fermionic}, this step needs extra care when dealing with the hexagonal lattice, since as $\eps\to0$ $v_\eps$ alternates between the two different types of points. Nevertheless, regardless of the point, the contribution will be the same and the result holds for $\mathbf H$ as well, but we omit this technical detail.
\end{remark}

\item\label{step2}

%

Given $\tau \in S_{\bare}(\mcE)$, fix $v\in V$, and let $\eta(v)=\eta(v,\,\tau)$ be the edge through which $\tau$ enters $v$. Let $\alpha(v) \in \{0,\,\dots,\,p - 1\}$, where $p$ is the number of edges contained in any two dimensional plane generated by any two edges incident on any $v\in V$; that is, $4$ for the hypercubic lattice in $d$ dimensions and $6$ for the triangular lattice. We define $\eta^\alpha(v)$ as the edge through which $\tau$ exists $v$, and $2\pi\alpha(v)/p$ denotes the angle between the entry and exit edges. Let $\gamma_\alpha(v) \coloneqq \cos \left(2\pi\alpha(v)/p\right)$, so that
\[
    \langle\eta(v),\,\eta^\alpha(v)\rangle = \gamma_\alpha(v).
\]
In the case of the hypercubic lattice the angles between entry and exit edges are multiples of $\pi/2$, hence their cosines belong to $\{-1,\,0,\,1\}$, whereas in the triangular lattice in $d=2$ angles are multiples of $\pi/3$, and their cosines belong to $\{-1,\,-1/2,\,0,\,1/2,\,1\}$.

As stated in Subsection~\ref{subsec:graphs-and-permutations}, any bare $\tau$ induces a permutation $\sigma\in S_{\cycl}(V)$ on vertices. We will extract from $\tau$ a permutation $\sigma$ among vertices and a choice of edges $\eta$, and we will separate it from what $\tau$ does ``locally'' in the edges corresponding to a given point. Note that $\eta$, $\sigma$ and $\alpha$ determine $E_\tau(V)$ and are functions of $\tau$ (we will not write this to avoid heavy notation). With the above definitions we have that~\eqref{eq:now_bare} becomes
\begin{multline}
(-1)^{\sum_v k_v}
	\sum_{\mcE:\, |\mcE_{v}|\ge k_v \; \forall v} 
	\sum_{\substack{\eta:\,V\to E(V)\\ \eta(v)\in \mcE_v\; \forall v}}\;
	\sum_{\sigma\in S_{\cycl}(V)} \sum_{\alpha:\,V\to\{0,\,\ldots,\,p-1\}} 
	\left(\prod_{v\in V} K(\mcE_v)\overline{M}\left(\eta^\alpha(v),\,\eta(\sigma(v))\right)\right)\times 
	\\
	\times\sum_{\tau\in S_{\bare}(\mcE;\,\eta,\,\sigma,\,\alpha)}\sign(\tau) \prod_{f\in \mcE\setminus \{\eta^\alpha(V)\}} 
	\overline{M}\left(f,\,\tau(f)\right),\label{eq:first_lim}
\end{multline}
where $\eta^\alpha(V) \coloneqq \{\eta^\alpha(v):\,v\in V\}$, and $S_{\bare}(\mcE;\,\eta,\,\sigma,\,\alpha)$ is the set of bare permutations which now enter and exit each point $v$ through the edges prescribed by $\eta$, $\sigma$ and $\alpha$. In this case we will say that $\tau$ is compatible with $(\mcE;\,\eta,\,\sigma,\,\alpha)$. Figures~\ref{fig:eta_sigma_gamma_2_figures_compatible} and \ref{fig:eta_sigma_gamma_incompatible_4_figures} give examples of compatible resp. non-compatible pairs of permutations for the hypercubic lattice in $d=2$.

\begin{figure}[ht!]
    \centering
    \includegraphics[scale=.6]{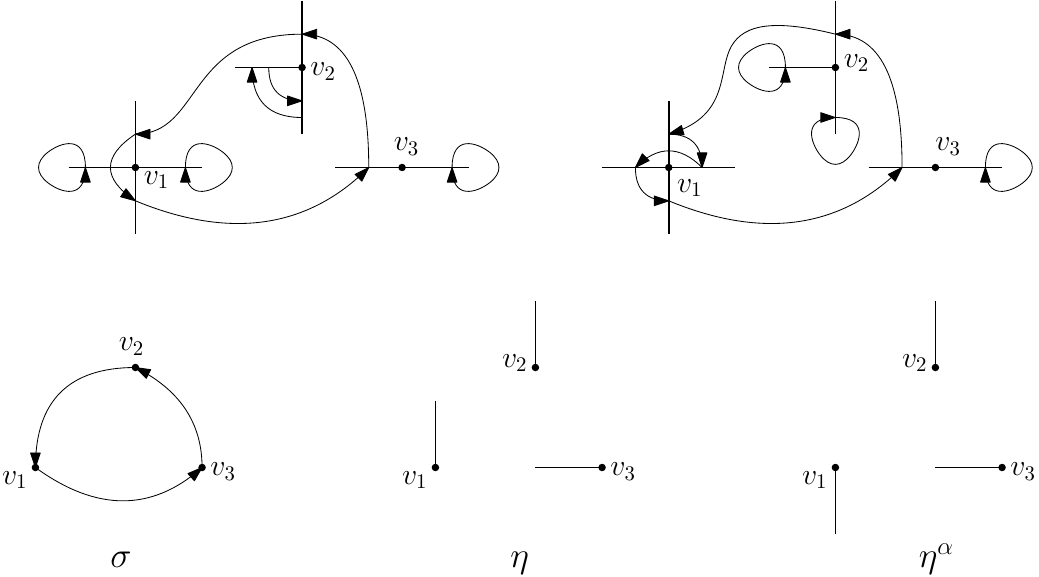}
    \caption{Top: two different compatible permutations in the hypercubic case in $d=2$. Bottom: their corresponding $\sigma$, $\eta$ and $\eta^\alpha$.}
    \label{fig:eta_sigma_gamma_2_figures_compatible}
\end{figure}

\begin{figure}[ht!]
    \centering
    \includegraphics[scale=.6]{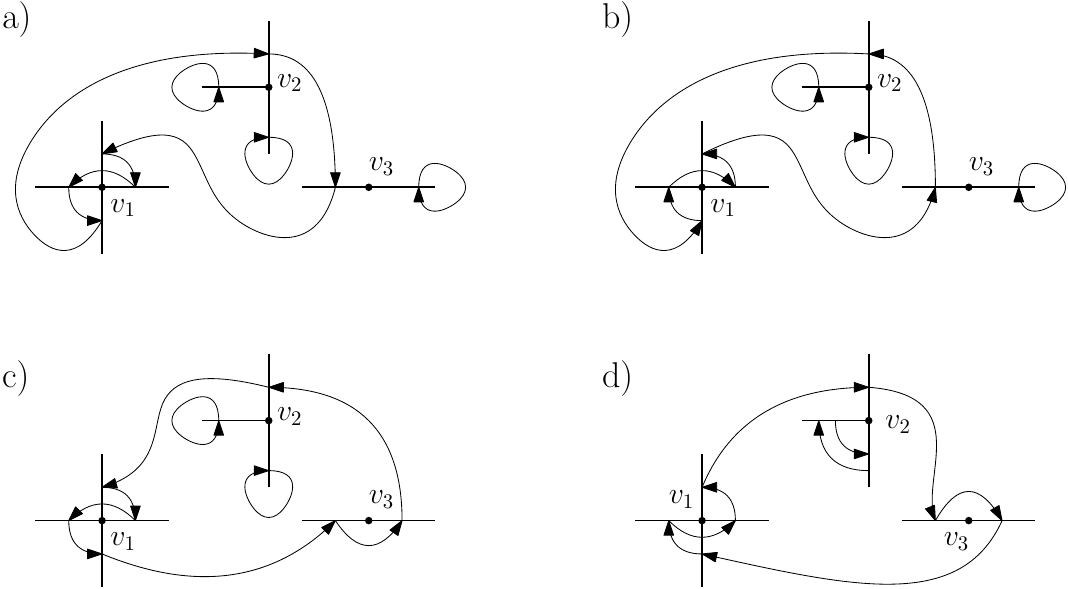}
    \caption{Four different permutations that are not compatible with those in Figure~\ref{fig:eta_sigma_gamma_2_figures_compatible}. a) Permutation that respects $\eta$ and $\eta^\alpha$ but not $\sigma$. b) Permutation that respects $\sigma$ and $\eta^\alpha$ but not $\eta(v_1)$. c) Permutation that respects $\sigma$ and $\eta$ but not $\eta^\alpha(w_3)$. d) Permutation that does not respect $\sigma$, nor $\eta(v_1)$, nor $\eta^\alpha(w_3)$.}
    \label{fig:eta_sigma_gamma_incompatible_4_figures}
\end{figure}

\item\label{step3}

Define $R_{v,\,\eta,\,\alpha}:\, \mathbb{R}^d \to \mathbb{R}^d$ to be the reflection perpendicular to the line given by $\eta(v)$, parallel to the plane generated by $\eta(v)$ and $\eta^\alpha(v)$ (in case they are co-linear the reflection is the identity). More precisely, let us call $\mathfrak S$ the plane generated by $\eta(v)$ and $\eta^\alpha(v)$, assuming they are not co-linear. Any edge $e\in\mcE$ can always be decomposed as
\[
    e = \mathcal P^{\mathfrak S}(e) + \mathcal P^{\mathfrak S^\perp}(e),
\]
being $\mathcal P^{\mathfrak S}$ (resp.~$\mathcal P^{\mathfrak S^\perp}$) the orthogonal projection operator on $\mathfrak S$ (resp. $\mathfrak S^\perp$, that is, the orthogonal complement of $\mathfrak S$ on $\R^d$). In turn, this can be further decomposed as
\[
    e = \mathcal P^{\mathfrak S}(e)_{\eta(v)} + \mathcal P^{\mathfrak S}(e)_{\eta(v)^\perp} + \mathcal P^{\mathfrak S^\perp}(e),
\]
being $\mathcal P^{\mathfrak S}(e)_{\eta(v)}$ the component of $\mathcal P^{\mathfrak S}(e)$ in the direction of $\eta(v)$, and $\mathcal P^{\mathfrak S}(e)_{\eta(v)^\perp}$ its orthogonal complement. Of course, $\mathcal P^{\mathfrak S}(e)_{\eta(v)} =(e)_{\eta(v)}$, that is, the component (or projection) of $e$ in the direction of $\eta(v)$. Let us rewrite this as
\[
    e = \mathcal P^{\mathfrak S}(e)_{\eta(v)^\perp} + e'
\]
for some unique $e'\in\R^d$. We then define $R_{v,\,\eta,\,\alpha}:\, \mathbb{R}^d \to \mathbb{R}^d$ as
\[
    R_{v,\,\eta,\,\alpha}(e) \coloneqq -\mathcal P^{\mathfrak S}(e)_{\eta(v)^\perp} + e'.
\]
We then define
\[
    \mathcal{E}' \coloneqq R_{v,\,\eta,\,\alpha}(\mcE)\coloneqq \left(\bigcup_{v' \neq v} \mcE_{v'}\right) \cup \left\{R_{v,\,\eta,\,\alpha}(e):\, e \in \mcE_v\right\}
\]
and, for $\tau \in S_{\bare}(\mathcal{E})$, define $\rho \in S_{\bare}(\mathcal{E}')$ as
\begin{equation*}
	\rho(e) 	
	=
	\begin{cases}
	\tau(e) & \text{ if } e \in \cup_{v' \neq v}\mathcal{E}_{v'},\\
	\tau(\eta^\alpha (v)) &\text{ if } e = R_{v,\,\eta,\,\alpha}(\eta^\alpha (v)),\\
	R_{v,\,\eta,\,\alpha}(\tau(e')) &\text{ if } e=R_{v,\,\eta,\,\alpha}(e') \text{ for some } e'\in\mcE_v\setminus\{\eta^\alpha(v)\} .
	\end{cases} 
\end{equation*}
See Figure~\ref{fig:reflection_square} for an example of the reflected permutation $\rho$ in the square lattice, and Figure~\ref{fig:reflection_triang} for the triangular lattice.
\begin{figure}[ht!]
\begin{minipage}{0.45\textwidth}
    \centering
    \includegraphics[scale=.8]{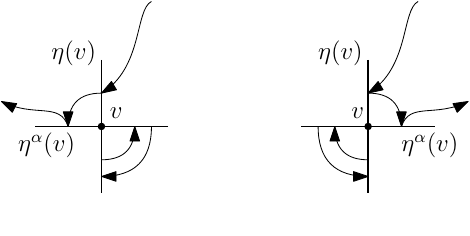}
    \caption{Square lattice in $d=2$. Left: a permutation $\tau$ on $v$. Right: its reflection $\rho$.}
    \label{fig:reflection_square}
\end{minipage}\hfill
\begin{minipage}{0.45\textwidth}
    \centering
    \includegraphics[scale=0.8]{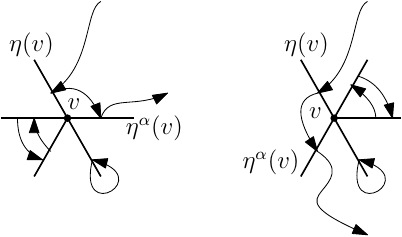}
    \caption{Triangular lattice in $d=2$. Left: a permutation $\tau$ on $v$. Right: its reflection $\rho$.}
    \label{fig:reflection_triang}
\end{minipage}
\end{figure}
We can then see that $K(\mathcal{E})=K(\mathcal{E}')$ and $\sign(\tau)=\sign(\rho)$.
Furthermore, with simple calculations of inner products we have
\begin{equation}\label{eq:cancel-trig-1} 
\overline{M}\left(\eta^\alpha(v),\,\eta(\sigma(v))\right)
+
\overline{M}\left(R_{v,\eta}(\eta^\alpha(v)),\,\eta(\sigma(v))\right)
=
2\cos\left(\frac{2\pi\alpha(v)}{p}\right)\,\overline{M}(\eta(v),\,\eta(\sigma(v))).
\end{equation}
Observe that these cancellations happen in the hypercubic, triangular and hexagonal lattices due to their high symmetries.

With~\ref{eq:cancel-trig-1} in mind, Equation~\eqref{eq:first_lim} becomes
\begin{multline}\label{eq:cancel-trig-2} 
	(-1)^{\sum_v k_v}
	\sum_{\mcE :\, |\mcE_v|\ge k_v \, \forall v} \sum_{\substack{\eta:\,V\to E(V)\\ \eta(v)\in \mcE_v\; \forall v}}\;\sum_{\sigma\in S_{\cycl}(V)} \sum_{\alpha:\,V\rightarrow \{0,\,\ldots,\,p-1\}} 
	\sum_{\tau\in S_{\bare}(\mcE;\,\eta,\,\sigma,\,\alpha)} \sign(\tau) \times
\\ \times 
\prod_{f\in \mcE \setminus \eta^{\alpha}(V)} \overline{M}\left(f,\,\tau(f)\right) \prod_{v\in V}K(\mcE_v) \gamma_\alpha(v) 	\underbrace{\prod_{v\in V} 
	\partial^{(1)}_{\eta(v)}\partial^{(2)}_{\eta(\sigma(v))}
	g_{U}\left(\eta (v),\,\eta(\sigma(v))\right)}_{(\star)}.
\end{multline}
The factor $(\star)$, which accounts for the interactions between different points, only depends on the entry directions given by $\eta$, not on the exit directions $\eta^{\alpha}$.
This is the key cancellation to obtain expressions of the form \eqref{eq:cum_limit3}, up to constant.

We rewrite expression \eqref{eq:cancel-trig-2} as
\begin{multline}\label{eq:second_lim}
\sum_{\substack{\eta:\,V\to E(V)\\ \eta(v)\in E_{v}\; \forall v}}\;
\sum_{\sigma\in S_{\cycl}(V)} 
\prod_{v\in V} 
\partial^{(1)}_{\eta(v)}\partial^{(2)}_{\eta(\sigma(v))}
	g_{U}\left(\eta (v),\,\eta(\sigma(v))\right)
\times \\ \times
\prod_{v\in V}\underbrace{(-1)^{k_v}
	\sum_{\substack{\mcE_v:\, \mcE_v\ni \eta(v)\\|\mcE_v|\geq k_v}} K(\mcE_v)
	\sum_{\alpha=0}^{p-1} \gamma_\alpha(v)
			 \sum_{\tau\in S_{\bare}(\mcE;\,\eta,\,\sigma,\,\alpha)}\sign(\tau) 
			\prod_{f\in\mcE_v\setminus\{\eta^\alpha(v)\}}\overline{M}\left(f,\,\tau(f)\right)}_{(\star\star)}.
\end{multline}
Remark that if $\eta^\alpha(v) \not \in {\mathcal{E}}$, the set  $S_{\bare}(\mcE;\,\eta,\,\sigma,\,\alpha)$ is empty, and therefore not contributing to the sum.

Notice that all entries of the type $\overline{M}(e,\,\tau(e))$ in $(\star \star)$ are discrete double gradients of the Green function of the full lattice $\mathbf{L}$ (see Equation \eqref{eq:def_bar_M}).
In the following we will prove that $(\star \star)$ does not depend on the choice of $\eta$ nor $\sigma$.
The value of the term $(\star \star)$ will give the constants $C_\mathbf{L}^{(k_v)}$ (up to an overall minus sign).

\item\label{step4} 
Using $\sigma$, $\eta$ and $\alpha$, we have been able to isolate in~\eqref{eq:second_lim} an expression that depends only on permutations of vertices. To complete the proof we will perform a ``surgery'' to better understand expression~\eqref{eq:second_lim}. This surgery aims at decoupling the local behavior of $\tau$ at a vertex versus the jumps of $\tau$ between different vertices.

To do this, given $\eta:\, V \to E(V)$, $\alpha: V \to \{0,\,\dots,\,p-1\}$, $\mathcal{E} \subseteq E(V)$ with $\eta(v),\, \eta^\alpha(v)  \in \mathcal{E}_v$, and $\tau \in S_{\bare}(\mcE;\,\eta,\,\sigma,\,\alpha)$, we define $\omega_v^\tau (\mathcal{E}_v \setminus \{\eta(v)\})$ and $\tau \setminus \omega^\tau_v ( (\mathcal{E} \setminus \mathcal{E}_v )\cup \{\eta(v)\})$ as
\begin{equation}\label{eq:def_omega_trig}
\omega^\tau_v(f)\coloneqq
\begin{cases}
    \tau(f)&\text{if }f\neq \eta^\alpha(v)\\
    \tau(\eta(v))&\text{if }f= \eta^{\alpha}(v),\,\alpha(v) \neq 0
\end{cases},\quad f\in \mcE_v\setminus\{\eta(v)\}
\end{equation}
and
\begin{equation*}\label{eq:def:tau_minus_trig}
\tau\setminus\omega^\tau_v(f)\coloneqq \begin{cases}
    \tau(f)&\text{if }f\notin \mcE_v\\
    \eta(\sigma(v))&\text{if }f= \eta(v)
\end{cases},\quad f\in (\mcE\setminus \mcE_v) \cup \{\eta(v)\}.
\end{equation*}
In words, $\omega^\tau_v$ is the permutation induced by $\tau$ on $\mcE_v\setminus\{\eta(v)\}$ by identifying the entry and the exit edges. On the other hand, $\tau\setminus\omega^\tau_v(f)$ follows $\tau$ globally until it reaches the edges incident to $v_\eps$, from where it departs reaching the edges of the next point. An example of $\omega_v^{\tau}$ for the triangular lattice can be found in Figure~\ref{fig:surgery_triangular}.

\begin{figure}[htb!]
    \centering
    \includegraphics[scale=.8]{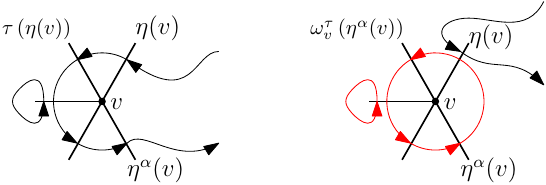}
    \caption{Left: a permutation $\tau$ at the point $v$. Right: the surgery applied to $\tau$, with $\omega^\tau_v$ denoted in red. }
    \label{fig:surgery_triangular}
\end{figure}

In the following we state two technical lemmas the we need to complete the proof of the theorem. These are identical to Lemmas 4.8 and 4.9/5.3 in~\citet{cipriani2023fermionic}, so we omit their proofs.

\begin{lemma}\label{lem:bij_tau}
Let $\mcE\subseteq E(V)$, $ \eta:\,V\to E(V)$ such that $\eta(v)\in \mcE_v$ for all $v\in V$, $\sigma\in S_{\cycl}(V)$, $\alpha:\, V\to\{0,\,\ldots,\,p-1\}$ and let $\tau$ be compatible with $(\mcE;\,\eta,\,\sigma,\,\alpha)$. For every $v\in V$ there is a bijection between $S(\mcE_v\!\setminus\!\{\eta(v)\})$ and $\{\omega^\tau_v:\,\tau\text{\normalfont{ compatible with }} (\mcE;\,\eta,\,\sigma,\,\alpha)\}$.
\end{lemma}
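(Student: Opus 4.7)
The plan is to show that the set $\{\omega^\tau_v : \tau \text{ compatible with } (\mcE;\,\eta,\,\sigma,\,\alpha)\}$ is in fact equal to the whole symmetric group $S(\mcE_v\setminus\{\eta(v)\})$, so that the claimed bijection becomes the identity. This breaks into two pieces: a forward check that every $\omega^\tau_v$ is a genuine permutation of $\mcE_v\setminus\{\eta(v)\}$, and a reverse surjectivity construction producing, for each $\omega\in S(\mcE_v\setminus\{\eta(v)\})$, some compatible $\tau$ with $\omega^\tau_v=\omega$.

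For the forward direction I would exploit bareness: at a bare $\tau$ the edge $\eta^\alpha(v)$ is the unique edge of $\mcE_v$ whose image under $\tau$ leaves $\mcE_v$, while $\eta(v)$ is the unique edge of $\mcE_v$ not in the image of $\tau|_{\mcE_v}$. Hence $\tau|_{\mcE_v\setminus\{\eta^\alpha(v)\}}$ is a bijection onto $\mcE_v\setminus\{\eta(v)\}$; in particular, when $\alpha(v)\neq 0$, $\tau(\eta(v))\in\mcE_v\setminus\{\eta(v)\}$ (the non-equality follows from $\tau^{-1}(\eta(v))\notin\mcE_v$). The definition of $\omega^\tau_v$ merely moves the output attached to $\eta(v)$ into the slot $\eta^\alpha(v)$, producing a bijection $\mcE_v\setminus\{\eta(v)\}\to\mcE_v\setminus\{\eta(v)\}$. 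In the degenerate case $\alpha(v)=0$, where $\eta(v)=\eta^\alpha(v)$, no redirection is needed and $\omega^\tau_v$ is simply $\tau|_{\mcE_v\setminus\{\eta(v)\}}$, already a permutation.

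For surjectivity, given $\omega\in S(\mcE_v\setminus\{\eta(v)\})$, I would construct $\tau$ by inverting the redirection on $\mcE_v$: set $\tau(\eta^\alpha(v))\coloneqq\eta(\sigma(v))$ (forced by compatibility with $\sigma$), $\tau(\eta(v))\coloneqq\omega(\eta^\alpha(v))$ when $\alpha(v)\neq 0$, and $\tau(f)\coloneqq\omega(f)$ for the remaining $f\in\mcE_v\setminus\{\eta(v),\eta^\alpha(v)\}$. At every other vertex $w\in V$ I would repeat the same recipe with an arbitrary choice of local $\omega_w\in S(\mcE_w\setminus\{\eta(w)\})$; the transitions $\eta^\alpha(w)\mapsto\eta(\sigma(w))$ glue the local pieces into a globally defined element of $S_{\bare}(\mcE;\,\eta,\,\sigma,\,\alpha)$. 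A direct unraveling of the definition shows that $\omega^\tau_v=\omega$.

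The main obstacle is purely organisational: one has to keep the cases $\alpha(v)=0$ and $\alpha(v)\neq 0$ cleanly separated, and to recognise that the map $\tau\mapsto\omega^\tau_v$ is typically many-to-one—different choices of $\omega_w$ at the vertices $w\neq v$ yield the same $\omega^\tau_v$. This many-to-one feature is harmless, since the lemma asks for a bijection between two \emph{sets} rather than between the space of compatible $\tau$ and $S(\mcE_v\setminus\{\eta(v)\})$; once both sides are shown to equal $S(\mcE_v\setminus\{\eta(v)\})$, the identity provides the required bijection.
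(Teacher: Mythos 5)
Your argument is correct: the forward direction (bareness forces $\tau|_{\mcE_v\setminus\{\eta^\alpha(v)\}}$ to biject onto $\mcE_v\setminus\{\eta(v)\}$, so the redirection defining $\omega^\tau_v$ yields a genuine permutation) and the explicit gluing construction for surjectivity together show the set of local permutations $\omega^\tau_v$ exhausts $S(\mcE_v\setminus\{\eta(v)\})$, which is exactly what the lemma asserts. The paper itself omits the proof, deferring to Lemma 4.8 of the cited reference, and your argument is the same standard one used there, including the correct observation that $\tau\mapsto\omega^\tau_v$ is many-to-one but that this is irrelevant to the stated bijection of sets.
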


\begin{lemma}[Surgery of $\tau$]\label{lem:surg_tau_trig}
Fix $v\in V$ and $\mcE$, $\eta$, $\sigma$, $\alpha$ as above.
Let $\tau$ be compatible with $\mcE$, $\eta$, $\sigma$ and $\alpha$.
Then 
\begin{equation}\label{eq:sgn_tau}
   \sign(\tau)=(-1)^{\1_{\{\alpha(v) \neq 0\}}}\sign(\tau\setminus\omega^\tau_v(f))\sign(\omega_v^\tau).
\end{equation}
Furthermore,
\[
\prod_{f\in \mcE_{v}\setminus  \{\eta^\alpha(v)\}} \overline M\left(f,\,\tau(f)\right) = \frac{\overline{M}\left(\eta(v),\,\omega^\tau_v(\eta^\alpha(v))\right)}{\overline{M}\left(\eta^\alpha(v),\,\omega^\tau_v(\eta^\alpha(v))\right)}  \prod_{f\in \mcE_{v}\setminus \{\eta(v)\}} \overline M\left(f,\,\tau(f)\right).
\]
Equivalently, we can write that
\begin{equation}\label{eq:surg_M}
\prod_{f\in \mcE_v\setminus \{\eta^\alpha(v)\}} \overline M\left(f,\,\tau(f)\right) = \prod_{f\in \mcE_v\setminus  \{\eta(v)\}} \overline M^{\alpha}\left(f,\,\omega^\tau_v(f)\right),
\end{equation}
where for any $g\in \mcE_v$
\begin{equation}\label{eq:defmalp}
\overline M^{\alpha}\left(f,\,g\right) :=\begin{cases}
\overline{M}(\eta(v),\,g) & \text{ if } f=\eta^{\alpha}(v), \\
\overline{M}(f,\,g) & \text{ if } f\neq \eta^{\alpha}(v).
\end{cases}
\end{equation}
\end{lemma}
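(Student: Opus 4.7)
The plan is to verify both claims by elementary structural checks that exploit the bare property of $\tau$ together with the precise definitions of $\omega_v^\tau$ and $\tau\setminus\omega_v^\tau$. Since $\tau$ is compatible with $(\mcE;\eta,\sigma,\alpha)$, the edge $\eta(v)$ is the unique entry to $v$ and $\eta^\alpha(v)$ the unique exit, so $\tau$ restricts to a bijection $\mcE_v\setminus\{\eta^\alpha(v)\}\to\mcE_v\setminus\{\eta(v)\}$, while $\tau(\eta^\alpha(v))=\eta(\sigma(v))\notin\mcE_v$ and there is a unique $f_0\in\mcE\setminus\mcE_v$ with $\tau(f_0)=\eta(v)$. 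From these observations one checks directly that $\omega_v^\tau$ is a permutation of $\mcE_v\setminus\{\eta(v)\}$ and that $\tau\setminus\omega_v^\tau$ is a permutation of $(\mcE\setminus\mcE_v)\cup\{\eta(v)\}$, so that the statement is well-posed.

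For the sign identity~\eqref{eq:sgn_tau}, the approach is to extend $\omega_v^\tau$ and $\tau\setminus\omega_v^\tau$ to permutations $\tilde\omega$ and $\widetilde{\tau\setminus\omega_v^\tau}$ of the full set $\mcE$ by the identity on the complementary domain; the signs of the extended permutations agree with those of the originals. A case-by-case evaluation of the composition $\widetilde{\tau\setminus\omega_v^\tau}\circ\tilde\omega$ on each $f\in\mcE$ shows that it agrees with $\tau$ everywhere except on the pair $\{\eta(v),\eta^\alpha(v)\}$, where the two images are swapped. Hence when $\alpha(v)\neq 0$ one has $\widetilde{\tau\setminus\omega_v^\tau}\circ\tilde\omega=\tau\circ(\eta(v)\,\eta^\alpha(v))$, and the transposition contributes a factor $-1$ upon taking signs; when $\alpha(v)=0$ the two edges coincide, the transposition is trivial, and the composition is simply $\tau$. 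Both cases are encoded in~\eqref{eq:sgn_tau}.

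For the product identity~\eqref{eq:surg_M}, I would compare both sides factor by factor. For every $f\in\mcE_v\setminus\{\eta(v),\eta^\alpha(v)\}$ the corresponding factor is $\overline M(f,\tau(f))$ on both sides, because $\omega_v^\tau(f)=\tau(f)$ and $\overline M^\alpha(f,\cdot)=\overline M(f,\cdot)$ by the definitions~\eqref{eq:def_omega_trig} and~\eqref{eq:defmalp}. The leftover factor on the left-hand side is $\overline M(\eta(v),\tau(\eta(v)))$; the leftover factor on the right-hand side is $\overline M^\alpha(\eta^\alpha(v),\omega_v^\tau(\eta^\alpha(v)))$, which by the very same two definitions also equals $\overline M(\eta(v),\tau(\eta(v)))$. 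The quotient form stated immediately before~\eqref{eq:surg_M} then follows from~\eqref{eq:surg_M} by reconstructing $\prod_{f\in\mcE_v\setminus\{\eta(v)\}}\overline M(f,\tau(f))$ on the right-hand side and absorbing the discrepancy at $f=\eta^\alpha(v)$ into the ratio of matrix entries.

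The only real subtlety is bookkeeping: one must treat carefully the degenerate case $\alpha(v)=0$, in which $\eta(v)=\eta^\alpha(v)$, the second branch in the definition of $\omega_v^\tau$ is vacuous, the transposition becomes the identity, and the index sets $\mcE_v\setminus\{\eta(v)\}$ and $\mcE_v\setminus\{\eta^\alpha(v)\}$ coincide, so that both identities collapse to tautologies. The second point requiring some care is to confirm that $\tilde\omega$ and $\widetilde{\tau\setminus\omega_v^\tau}$ act compatibly on their respective fixed sets, so that the composition is literally---not merely conjugate to---the transposed $\tau$.
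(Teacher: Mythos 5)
Your argument is correct, and it is essentially the canonical proof of this lemma. Note that the paper itself does not prove the statement: it defers to Lemmas 4.8 and 4.9/5.3 of \citet{cipriani2023fermionic}, so your write-up is a genuine reconstruction rather than a paraphrase. The two key points are exactly the right ones: (i) compatibility of $\tau$ with $(\mcE;\eta,\sigma,\alpha)$ forces $\tau$ to restrict to a bijection $\mcE_v\setminus\{\eta^\alpha(v)\}\to\mcE_v\setminus\{\eta(v)\}$, which makes $\omega_v^\tau$ and $\tau\setminus\omega_v^\tau$ well-defined permutations of their stated domains; and (ii) after extending both by the identity, their composition equals $\tau\circ(\eta(v)\ \eta^\alpha(v))$, whence the factor $(-1)^{\1_{\{\alpha(v)\neq 0\}}}$ in \eqref{eq:sgn_tau}. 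Your factor-by-factor check of \eqref{eq:surg_M} is also complete: the only non-matching factors are $\overline M(\eta(v),\tau(\eta(v)))$ on the left and $\overline M^\alpha(\eta^\alpha(v),\omega_v^\tau(\eta^\alpha(v)))$ on the right, and these coincide by \eqref{eq:def_omega_trig} and \eqref{eq:defmalp}.

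One remark on the intermediate ``quotient'' display: as printed in the lemma, with $\overline M(f,\tau(f))$ inside the product on the right-hand side, it is not actually equivalent to \eqref{eq:surg_M} --- it would require $\overline M(\eta^\alpha(v),\tau(\eta(v)))=\overline M(\eta^\alpha(v),\tau(\eta^\alpha(v)))$, and the latter entry involves $\eta(\sigma(v))$, a non-local edge. The display is correct once the final product is read as $\prod_{f\in\mcE_v\setminus\{\eta(v)\}}\overline M(f,\omega_v^\tau(f))$, i.e.\ there is a typo in the statement. Your phrase about ``absorbing the discrepancy at $f=\eta^\alpha(v)$ into the ratio'' implicitly performs this correction; it would be worth making that explicit, but since only \eqref{eq:surg_M} is invoked in the proof of Theorem~\ref{thm:cum_cont}, nothing downstream is affected.
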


Remark that the matrix $\overline{M}^{\alpha}$ is not symmetric anymore. We will now use these lemmas to rewrite~\eqref{eq:second_lim} in a more compact form. Using~\eqref{eq:sgn_tau} recursively, we get
\[
\sign(\tau) = \left(\prod_{v\in V} (-1)^{\1_{\{\alpha(v)\neq 0\}}} \sign(\omega^\tau_v)\right) \sign((((\tau\setminus\omega^\tau_{v_1})\setminus\omega^\tau_{v_2})\setminus\ldots)\setminus\omega^\tau_{v_n}).
\]
Note that the permutation $ (((\tau\setminus\omega^\tau_{v_1})\setminus\omega^\tau_{v_2})\setminus\ldots)\setminus\omega^\tau_{v_n}$ equals the permutation 
\[
(\eta(v_1),\,\eta(\sigma(v_1)),\,\eta(\sigma(\sigma(v_1))),\,\ldots,\,\eta(\sigma^{n-1}(v_1)))
\]
and, as such, it constitutes a cyclic permutation on $n$ edges in $\mcE$, so that
\[
\sign((((\tau\setminus\omega^\tau_{v_1})\setminus\omega^\tau_{v_2})\setminus\ldots)\setminus\omega^\tau_{v_n}) = (-1)^{n-1}.
\]
With this in mind, applying~\eqref{eq:surg_M} at every $v$ we can rewrite $\prod_{v\in V}(\star\star)$ as
\begin{multline*}
(-1)^{n-1}\prod_{v\in V}(-1)^{k_v}
	\sum_{\substack{\mcE_v:\, \mcE_v\ni \eta(v)\\|\mcE_v|\geq k_v}} K(\mcE_v)
	\sum_{\alpha=0}^{p-1} \gamma_\alpha(v) \, \1_{\{\eta^\alpha(v)\in\mcE_v\}}\\
			 \sum_{\tau\in S_{\bare}(\mcE;\,\eta,\,\sigma,\,\alpha)}(-1)^{\1_{\{\alpha(v)\neq 0\}}}\sign(\omega_v^\tau)
			\prod_{f\in\mcE_v\setminus\{\eta(v)\}}\overline{M}^\alpha\left(f,\,\omega^\tau_v(f)\right).
\end{multline*}
Recall that, given $\alpha(v)$, $\omega_v^\tau(\eta^\alpha(v))=\tau(\eta(v))$, which means that now the dependence on $\tau$ is only through $\omega_v^\tau$ and $\alpha(v)$. This, together with Lemma~\ref{lem:bij_tau}, allows us to obtain
\begin{multline}\label{eq:prod_const}
-\prod_{v\in V}(-1)^{1+k_v}
	\sum_{\substack{\mcE_v:\, \mcE_v\ni \eta(v)\\|\mcE_v|\geq k_v}} K(\mcE_v)
	\sum_{\alpha=0}^{p-1} \gamma_\alpha(v) \, \1_{\{\eta^\alpha(v)\in\mcE_v\}}\\
			 \sum_{\omega_v\in S(\mcE_v\setminus\{\eta(v)\})}(-1)^{\1_{\{\alpha(v)\neq 0\}}}\sign(\omega_v)
			\prod_{f\in\mcE_v\setminus\{\eta(v)\}}\overline{M}^\alpha\left(f,\,\omega_v(f)\right).
\end{multline}

At this point, we note that the expression above does not depend on $\sigma$ or $\eta$ anymore, and only depends on $v$ through $k_v$.
In fact, as $\omega_v(f)^-=f^{-}=v$, we have that $\overline{M}\left(f,\,\omega_v(f)\right)$ is a constant by definition (see~\eqref{eq:def_bar_M}). 
Therefore, without loss of generality, we can take $v=o$, $\eta(v)=e_1$ to get that \eqref{eq:prod_const} is equal to minus the product over $v$ of
\begin{multline*}
(-1)^{1+k_v}
	\sum_{\substack{\mcE_o:\, \mcE_o\ni e_1\\|\mcE_o|\geq k_v}} K(\mcE_o)
	\sum_{\alpha=0}^{p-1}\Bigg[\1_{\{\alpha=0\}}\sum_{\omega\in S(\mcE_o\setminus\{e_1\})}\sign(\omega)
			\prod_{f\in\mcE_o\setminus\{e_1\}}\overline{M}\left(f,\,\omega(f)\right)
 \\
			 - \gamma_\alpha(v) \, \1_{\{e_{1+\alpha}\in\mcE_o\}} \1_{\{\alpha\neq 0\}} \sum_{\omega\in S(\mcE_o\setminus\{e_1\})}\sign(\omega)
			\prod_{f\in\mcE_o\setminus\{e_1\}}\overline{M}^\alpha\left(f,\,\omega(f)\right) \Bigg].
\end{multline*}
Using the definition of determinant of a matrix, after applying the sum on $\alpha\in\{0,\,\ldots,\,p-1\}$ the first term in the square brackets above is equal to $\det\left(\overline M\right)_{\mcE_o\setminus\{e_1\}}$, while for $\alpha \neq 0$ the second one yields $\1_{\{e_{1+\alpha}\in\mcE_o\}} \det\big(\overline M^\alpha\big)_{\mcE_o\setminus\{e_1\}}$, with $\overline M^\alpha$ as in~\eqref{eq:Malpha}. Summing these contributions we obtain the cumulants
\begin{multline*}
- \left[\prod_{v\in V}C_\mathbf{L}^{(k_v)}\right] \left(\frac{1}{c_{\mathbf L}}\right)^n\sum_{\sigma\in S_{\cycl}(V)} \sum_{\eta:\,V\to E_o} \prod_{v\in V} \partial_{\eta(v)}^{(1)}\partial_{\eta(\sigma(v))}^{(2)} g_U\left(v,\, \sigma(v)\right) = \\
- \left[\prod_{v\in V}C_\mathbf{L}^{(k_v)}\right] \sum_{\sigma\in S_{\cycl}(V)} \sum_{\eta:\,V\to \{\tilde e_1,\,\ldots,\,\tilde e_d\}} \prod_{v\in V} \partial_{\eta(v)}^{(1)}\partial_{\eta(\sigma(v))}^{(2)} g_U\left(v,\, \sigma(v)\right),
\end{multline*}
where the last change of coordinates is identical to that of~\citet{cipriani2023fermionic}, being
\[
    C_\mathbf{L}^{(k_v)} = (-1)^{k_v+1} \,c_{\mathbf L}\sum_{\substack{\mcE_o\ni e_1\\|\mcE_o|\geq k_v}} (-1)^{|\mcE_o|} \binom{|\mcE_o|}{k_v} \left[\det\left(\overline M\right)_{\mcE_o\setminus{\{e_1\}}} - \sum_{\alpha=1}^{p-1} \gamma_\alpha \1_{\{e_{1+\alpha}\in \mcE_o\}} \det\big(\overline{M}^\alpha\big)_{\mcE_o\setminus\{e_1\}} \right],
\]
with $c_{\Z^d} = 2$ for all $d \geq 2$, and $c_{\mathbf T} = 3$.\qedhere
\end{enumerate}

\end{proof}

\begin{remark}
    We highlight once again that, with the technical exception of~\ref{step1}, all the other steps follow in much the same way for $\mathbf H$, in which case $p=3$, and the value of $c_{\mathbf H}$ can also be calculated, obtaining $c_{\mathbf H} = 3/2$.
\end{remark}

\label{tab:constant_values}
Using the potential kernel values of the lattices (see e.g. \cite{KenyonWilson} or \cite{RuelleTriang}), some values of $C_{\mathbf L}^{(k_v)}$ in two dimensions are
\begin{align*}
    &C_{\Z^2}^{(1)} = \frac{8}{\pi} - \frac{16}{\pi^2} \approx 0.9253 \\
    &C_{\Z^2}^{(2)} = 18 - \frac{72}{\pi} + \frac{96}{\pi^2} \approx 4.8085\\
    &C_{\Z^2}^{(3)} = 2 + \frac{16}{\pi} \approx 7.0930 \\
    &C_{\Z^2}^{(4)} = -2\\
    &C_{\mathbf T}^{(1)} = -\frac{25}{6} - \frac{5\sqrt{3}}{2\pi} + 
    \frac{297}{\pi^2} - \frac{594\sqrt{3}}{\pi^3} + \frac{972}{\pi^4} \approx 1.3443\\
    &C_{\mathbf T}^{(2)} = -\frac{35}{8} + \frac{611\sqrt{3}}{4\pi} - \frac{4077}{2\pi^2} + \frac{3159\sqrt{3}}{\pi^3} - \frac{4860}{\pi^4} \approx -0.1296\\
    &C_{\mathbf T}^{(3)} = \frac{239}{4} - \frac{537\sqrt{3}}{\pi} + \frac{5031}{\pi^2} - \frac{6696\sqrt{3}}{\pi^3} + \frac{9720}{\pi^4} \approx -0.8286\\
    &C_{\mathbf T}^{(4)} = -\frac{599}{6} + \frac{1433\sqrt{3}}{2\pi} - \frac{5832}{\pi^2} + \frac{7074\sqrt{3}}{\pi^3} - \frac{9720}{\pi^4} \approx -0.3339\\
    &C_{\mathbf T}^{(5)} = \frac{247}{4} - \frac{841\sqrt{3}}{2\pi} + \frac{3240}{\pi^2} - \frac{3726\sqrt{3}}{\pi^3} + \frac{4860}{\pi^4} \approx -0.0497\\
    &C_{\mathbf T}^{(6)} = -\frac{105}{8} + \frac{363\sqrt{3}}{4\pi} - \frac{1395}{2\pi^2} + \frac{783\sqrt{3}}{\pi^3} - \frac{972}{\pi^4} \approx -0.0026\\
    &C_{\mathbf H}^{(1)} = \frac34\\
    &C_{\mathbf H}^{(2)} = 0\\
    &C_{\mathbf H}^{(3)} = -\frac34
\end{align*}


\section*{Declarations}
\subsection*{Funding}
AR is funded by the grant OCENW.KLEIN.083 from the Dutch Research Council. AR acknowledges the hospitality of UCL, where part of this work was carried out.

\bibliographystyle{abbrvnat}
\bibliography{references}


\end{document}